\newtheorem{theorem}{Theorem}[section]
\newtheorem{proposition}[theorem]{Proposition}
\newtheorem{lemma}[theorem]{Lemma}
\newtheorem{corollary}[theorem]{Corollary}
\theoremstyle{definition}
\newtheorem{definition}[theorem]{Definition}
\newtheorem{remark}[theorem]{Remark}
\newtheorem{question}{Question}
\def\R{\mathbb{R}}
\def\N{\mathbb{N}}
\def\C{\mathbb{C}}
\def\E{\mathbb{E}}
\def\lam{\lambda}
\def \supp{\operatorname{supp}}
\def\Re{{\mathrm{Re\,}}}
\def\cE{{\mathcal E}}
\def\cL{{\mathcal L}}
\def\cM{{\mathcal M}}
\newcommand{\ud}{\mathrm{d}}
\newcommand{\X}{{X}}
\newcommand{\B}{{B}}
\newcommand{\dom}[1]{\mathsf{D}_{#1}}  
\newcommand{\rg}[1]{\mathsf{R}_{#1}}
\def\trXY1-tq{Tr(X,Y, 1-\theta ,q)}
\def\mrew{W^{1,\Phi} (X,Y)}
\definecolor{darkred}{rgb}{0.7,0.1,0.1}
\title[Real interpolation with weighted rearrangement invariant BFS]{Real interpolation with weighted rearrangement invariant Banach function spaces}
\author{Ralph Chill}
\address{R.~Chill, Institut f\"ur Analysis, Fachrichtung Mathematik, TU Dresden, 01062 Dresden, Germany}
\email{ralph.chill@tu-dresden.de}
\author{Sebastian Kr\'ol}
\address{S. Kr\'ol, Institut f\"ur Analysis, Fachrichtung Mathematik, TU Dresden, 01062 Dresden, Germany and Faculty of Mathematics and Computer Science, Nicolaus Copernicus University, ul. Chopina 12/18, 87-100 Toru\'n, Poland}
\email{sebastian.krol@mat.umk.pl}
\thanks{The second author was partially supported by the Alexander von Humboldt Foundation and Narodowe Centrum Nauki grant DEC-2011/03/B/ST1/00407}
\begin{document}

\date{\today}%05.07.14 % 07.01.16

\keywords{first order Cauchy problem, maximal regularity, interpolation, $K$-method, rearrangement invariant Banach function space}

\subjclass{46D05}  %28C15, 46E05, 28C99} 

\begin{abstract} 
Motivated by recent applications of weighted norm inequalities to maximal regularity of first and second order Cauchy problems, we study real interpolation spaces on the basis of general Banach function spaces and, in particular, weighted rearrangement invariant Banach function spaces. We show equivalence of the trace method and the $K$-method, identify real interpolation spaces between a Banach space and the domain of a sectorial operator, and reprove an extension of Dore's theorem on the boundedness of $H^\infty$-functional calculus to this general setting.
\end{abstract}

\renewcommand{\subjclassname}{\textup{2010} Mathematics Subject Classification}

\maketitle

\section{Introduction}

In the interpolation theory the real interpolation spaces play a prominent role because they appear naturally as trace spaces of, for instance, classical Sobolev spaces on domains and are thus connected to the regularity theory of elliptic or parabolic boundary value problems. In the abstract setting, they appear as trace spaces of certain weighted, vector-valued Sobolev spaces on the real half-line and find here their connection to abstract Cauchy problems in Banach spaces, namely as the spaces of initial values for which the solutions have some described time regularity; see, for example, the monographs by Butzer \& Berens \cite{BuBe67}, Lunardi \cite{Lu95,Lu09} and Triebel \cite{Tr95}.

It is this connection to abstract Cauchy problems in combination with some recent results from Chill \& Fiorenza \cite{ChFi14} and the present authors \cite{ChKr15,ChKr14} on the extrapolation of $L^p$-maximal regularity to maximal regularity in weighted rearrangement invariant Banach function spaces which is the motivation of this article. We consider real interpolation spaces defined by the trace method, however, with the usual power weighted $L^p$ spaces on the half-line replaced by general Banach function spaces over $\R_+$. Real interpolation spaces in this general setting have already been studied, although they are usually defined by Peetre's $K$-method. Bennett, for example, extended the $K$-method by replacing the classical weighted $L^p$ spaces by {\em unweighted} rearrangement invariant Banach function spaces \cite{Be74II}. On the other hand, several authors studied extensions of the $K$-method using more general weights than power weights, staying however within the $L^p$ 
scale; see for example Kalugina \cite{Ka75} (quasipower weights), Sagher \cite{Sa81} (weights of Calder\'on type), and more recently Bastero, Milman \& Ruiz \cite{BaMiRu01} (Calder\'on weights). In \cite{BaMiRu01} were also considered the classes of weights $w$, introduced in Ari\~{n}o \& Muckenhoupt \cite{ArMu90}, such that the Hardy operator is bounded on the cone of decreasing functions in $L^p_w$. Real interpolation spaces on the basis of general Banach function spaces have been studied in Bennett \& Sharpley \cite{BeSh88} and Brudny{\u\i} \& Krugljak \cite{BrKr91}.

Given an interpolation couple $(X,Y)$ of Banach spaces, and given a Banach function space $\Phi$ over $\R_+$, we recall in Section \ref{sec.definition} the definitions of the real interpolation spaces $[X,Y]_{\Phi}$ and $(X,Y)_{\Phi}$ by the trace method and the $K$-method, respectively, and we show that they coincide under some assumptions on $\Phi$ which include the boundedness of the Hardy operator. Although the content of this section is straightforward, we include it, mainly because our setting slightly differs from the setting in Brudny{\u\i} \& Krugljak, but also because the Hardy operator is a main tool in the rest of this article. 

Like in the classical case, the identification of the real interpolation spaces by other methods is an important issue. In the literature there exists a number of results on the identification of the classical real interpolation spaces between the domains of powers of sectorial operators and the underlying Banach space. Sectorial operators arise in the theory of partial differential equations as well as in the abstract setting, for example as negative generators of $C_0$-semigroups. A representation of the classical interpolation spaces as \emph{Besov type spaces} was studied by Komatsu \cite{Ko67} (see also Triebel \cite{Tr95}), Haase \cite[Chapter 6]{Hs06}, and Kalton \& Kucherenko \cite{KaKu07}; see also relevant results due to Kriegler \& Weis \cite{KrWe14} and Kunstmann \& Ullmann \cite{KuUl14}. In Section \ref{sec.functional.calculus}, given a sectorial operator $A$ on a Banach space $X$, with domain $\dom{A}$, we characterise the real interpolation spaces $(X,\dom{A})_{\Phi}$ via the holomorphic 
functional calculus (Theorems \ref{fcrepresent} and \ref{fcrepresent2}) and via regularity of semigroup orbits (Theorem \ref{semigroup}). We then apply these 
characterisations in order to recover a consequence of a version of a theorem of Dore, due to Kalton \& Kucherenko \cite{KaKu07}, by showing that the part of a sectorial operator $A$ in the interpolation space $(X,\dom{A})_{\Phi}$ always admits a bounded $H^\infty$-functional calculus (Theorem \ref{general Dore th}).

As mentioned above, the boundedness of the Hardy operator (and sometimes also its formal adjoint and the Calder\'on operator) on the underlying Banach function space is a crucial ingredient in various proofs. This follows already from a close analysis of Bennett's approach. Section \ref{sec.weights} is thus devoted to the study of the boundedness of the Hardy operator on weighted rearrangement invariant Banach function spaces. In particular, we introduce the classes $M_\E$ of weights $w$ such that the Hardy operator is bounded on $\E_w$. A characterisation of the class $M_\E$ for general rearrangement invariant Banach function spaces $\E$, for example in the spirit of Muckenhoupt's condition, is, to the best of our knowledge, not given in the literature. However, in Section \ref{sec.weights} we show that $A_{p_\E}^-\subseteq M_\E$ for every rearrangement invariant Banach function space $\E$ with Boyd indices $p_\E$, $q_\E\in(1,\infty)$; see 
Theorem \ref{pq operators} and Corollary \ref{A-p}. The class $A_{p_\E}^-$ has been defined in \cite{ChKr15} and is a rather large class of weights for which an extrapolation result for singular integral operators (in the spirit of Cruz-Uribe, Martell \& P\'erez \cite{CrMaPe11} and Curbera et al. \cite{CGMP06}) as well as an extrapolation result for maximal regularity of Cauchy problems holds true. In particular, this class contains all restrictions of Muckenhoupt weights to the positive half-line, as well as all positive, decreasing functions on $(0,\infty )$. 

The connection to the first order Cauchy problem is shortly described in Section \ref{sec.appl}. Knowing the precise time regularity of solutions, depending on the initial values and right-hand sides, is fundamental for applications to fully nonlinear evolution equations. Potential applications of the general real interpolation spaces exist, however, also in the context of perturbation theory for abstract Cauchy problems like, for example, in Haak, Haase \& Kunstmann \cite{HaHaKu06}, or in the study of nonautonomous Cauchy problems, namely when regularity of the underlying operator family in some interpolation space is part of the assumptions for proving existence and uniqueness of solutions; see Yagi \cite{Ya90}. 

It would be desirable to have an inner description of the real interpolation spaces in the case when the sectorial operators are concrete partial differential operators and their domains are concrete function spaces. In the case when these operators act on $L^p$ spaces and when their domains are classical Sobolev spaces, we think of characterizations of the resulting Besov spaces in terms of integrability properties of functions and their translates; see Triebel \cite[\S 3.4.2, p.208]{Tr83}. Let us mention, however, that sometimes such a description amounts to identify trace spaces of more complicated, anisotropic, weighted Sobolev spaces, a problem which has only recently been solved in the $L^p$-setting with power weights; see Meyries \& Schnaubelt \cite{MeSc12,MeSc12a} and Meyries \& Veraar \cite{MeVe14}. 

\section{Generalised real interpolation spaces} \label{sec.definition}

In this section we review the real interpolation functors of the trace method in the context of general Banach function spaces $\Phi$ instead of weighted $L^p$ spaces. We give a short proof of the fact that we are really dealing with interpolation functors, and then we show that the trace functors coincide with the real interpolation functors of the $K$-method if the Hardy operator is bounded on $\Phi$.

Throughout, all function spaces -- with the exception of spaces of holomorphic functions -- are function spaces over the domain $\R_+ := [0,\infty)$. We denote by $\cM$ the set of all complex measurable functions on $\R_+$ equipped with the topology of convergence in Lebesgue measure, and we let $L^1_{loc}$ be the space of all locally integrable functions on $\R_+$. 

Following \cite{BrKr91}, a Banach space $\Phi$ is called a {\it Banach function space (over $\R_+$)} if it is both a Banach lattice and an order ideal of $\cM$. This means that $\Phi$ is a sublattice of $\cM$, and $|f|\leq |g|$, $g\in \Phi$, $f\in\cM$, implies $f\in \Phi$ and $\| f\|_\Phi \leq \| g\|_\Phi$. This definition is consistent with Meyer-Nieberg \cite{MN91}, for instance, where it is used in the sense of complete K\"othe function space, but it differs from the definition in Bennett \& Sharpley \cite{BeSh88}, where in addition a Fatou property is required to hold. Note that in Brudny{\u\i} \& Krugljak \cite{BrKr91}, Banach function spaces are called Banach lattices.  

\subsection{The trace method}

This interpolation method is relevant for the applications to Cauchy problems. 
Let $\Phi$ be a Banach function space which embeds continuously into $L^1_{loc}$.
Given a Banach space $X$, we define the vector-valued variant of $\Phi$ by
\[
\Phi (\X ) :=\left\{ f: \R_+ \rightarrow \X \;\;{\rm{measurable }}:  |f|_X \in \Phi \right\} .
\]
This space is a Banach space for the norm $\|f\|_{\Phi(X)}:=\| |f|_X \|_{\Phi}$ (for simplicity, as it is usually done, we identify the space of measurable functions with the space of equivalence classes of measurable functions).

Let $(X,Y)$ be an interpolation couple of Banach spaces. Consider the space 
\[
W^{1,\Phi}(X,Y):= \left\{u\in W^{1,1}_{loc} (X+Y) : u\in \Phi(Y),
 \dot u\in \Phi(X) \right\} .
\]
which is, thanks to the continuous embedding $\Phi \subseteq L^1_{loc}$, a Banach space for the natural norm. 

Then we define the corresponding {\em trace space}  
\[
T_{\Phi}(X,Y):= [X,Y]_{\Phi}:= \left\{ u(0): u\in W^{1,\Phi}(X,Y) \right\} 
\]
which is a Banach space for the quotient norm
\[
|x|_{[X,Y]_\Phi} := \inf \left\{\|u \|_{\Phi(Y)} + \|\dot u \|_{\Phi(X)}: u \in W^{1,\Phi}(X,Y), u(0)=x \right\}.
\]
We say ``quotient norm'' because 
\[
W^{1,\Phi}_0 (X,Y) := \{ u\in W^{1,\Phi}(X,Y) : u(0)=0\}
\]
is a closed subspace of $\mrew$ and the trace space equipped with the above norm is isometrically isomorphic to the quotient space $W^{1,\Phi}(X,Y) / W^{1,\Phi}_0 (X,Y)$. \\

Our first lemma shows that $T_\Phi$ is an interpolation functor if and only if $\chi_{(0,1)} \in \Phi$.

\begin{lemma} \label{T-lemma}
Let $\Phi$ be a Banach function space such that $\Phi\subseteq L^1_{loc}$ continuously, and let $(X,Y)$ be an interpolation couple of Banach spaces.
\begin{itemize}
 \item [(i)] If $[X,Y]_{\Phi} \not= \{ 0\}$, then $\chi_{(0,\tau)}\in \Phi$ for some $\tau >0$.
 \item [(ii)] If $\chi_{(0,\tau )}\in \Phi$ for some $\tau >0$, then $[X,Y]_{\Phi}$ is an interpolation space relative to $(X,Y)$.
\end{itemize}
\end{lemma}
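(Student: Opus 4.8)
The plan is to prove the two implications separately; in both cases only the order‑ideal structure of $\Phi$ and the continuous embedding $\Phi\subseteq L^1_{loc}$ will be used, and the hypothesis $\chi_{(0,\tau)}\in\Phi$ enters at exactly one point.

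For (i), I would start from a nonzero element $x=u(0)$ with $u\in W^{1,\Phi}(X,Y)$. Since $u\in W^{1,1}_{loc}(X+Y)$ it has a locally absolutely continuous representative, so $t\mapsto u(t)$ is continuous in $X+Y$ with $u(0)=x$, and there is $\tau>0$ with $|u(t)-x|_{X+Y}<|x|_{X+Y}/2$, hence $|u(t)|_{X+Y}>|x|_{X+Y}/2$, for all $t\in(0,\tau)$. On the other hand, $u\in\Phi(Y)$ forces $u(t)\in Y$ for a.e.\ $t$, and for such $t$ the decomposition $u(t)=0+u(t)$ gives $|u(t)|_{X+Y}\leq|u(t)|_Y$. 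Combining the two estimates yields $\tfrac{|x|_{X+Y}}{2}\,\chi_{(0,\tau)}\leq|u(\cdot)|_Y$ a.e.\ on $\R_+$; since $|u(\cdot)|_Y\in\Phi$ and $\Phi$ is an order ideal of $\cM$, we get $\chi_{(0,\tau)}\in\Phi$.

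For (ii), assuming $\chi_{(0,\tau)}\in\Phi$, I would verify the two defining properties of an interpolation space. \emph{Intermediate space.} To see $X\cap Y\hookrightarrow[X,Y]_\Phi$, take $x\in X\cap Y$ and the continuous piecewise affine cut‑off $\varphi\colon\R_+\to[0,1]$ equal to $1$ on $[0,\tau/2]$, affine on $[\tau/2,\tau]$, and $0$ on $[\tau,\infty)$; then $u:=\varphi(\cdot)x\in W^{1,1}_{loc}(X+Y)$ with $\dot u=\varphi'(\cdot)x$, and $|u(\cdot)|_Y\leq\chi_{(0,\tau)}|x|_Y$, $|\dot u(\cdot)|_X\leq\tfrac{2}{\tau}\chi_{(0,\tau)}|x|_X$, so the ideal property gives $u\in W^{1,\Phi}(X,Y)$ and hence, via $u(0)=x$, the bound $|x|_{[X,Y]_\Phi}\leq\|\chi_{(0,\tau)}\|_\Phi(|x|_Y+\tfrac2\tau|x|_X)$. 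For $[X,Y]_\Phi\hookrightarrow X+Y$, I would use the fundamental theorem of calculus: for $u\in W^{1,\Phi}(X,Y)$ and a.e.\ $t\in(0,\tau)$, $u(0)=u(t)-\int_0^t\dot u(s)\,ds$, so $|u(0)|_{X+Y}\leq|u(t)|_Y+\int_0^\tau|\dot u(s)|_X\,ds$; integrating in $t$ over $(0,\tau)$, dividing by $\tau$, and invoking the boundedness of $\Phi\hookrightarrow L^1(0,\tau)$ to control both integrals by constants times $\|u\|_{\Phi(Y)}$ and $\|\dot u\|_{\Phi(X)}$, then taking the infimum over all competitors $u$, yields $|x|_{X+Y}\leq C_\tau|x|_{[X,Y]_\Phi}$. \emph{Transfer of operators.} Given a linear $T$ on $X+Y$ with $T|_X\in\cL(X)$ and $T|_Y\in\cL(Y)$, for $u\in W^{1,\Phi}(X,Y)$ one has $Tu\in W^{1,1}_{loc}(X+Y)$ with $\dot{(Tu)}=T\dot u$, and $|Tu(\cdot)|_Y\leq\|T\|_{\cL(Y)}|u(\cdot)|_Y$, $|T\dot u(\cdot)|_X\leq\|T\|_{\cL(X)}|\dot u(\cdot)|_X$, so again $Tu\in W^{1,\Phi}(X,Y)$; passing to traces and infima over $u$ with $u(0)=x$ shows $Tx\in[X,Y]_\Phi$ with $|Tx|_{[X,Y]_\Phi}\leq\max\{\|T\|_{\cL(X)},\|T\|_{\cL(Y)}\}\,|x|_{[X,Y]_\Phi}$.

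There is no real obstacle here: the whole argument is bookkeeping, and the only genuinely delicate points are choosing the absolutely continuous representative of $u$ so that pointwise evaluation makes sense, and exploiting $|z|_{X+Y}\le|z|_Y$. It is worth noting in the write‑up that the embeddings $[X,Y]_\Phi\hookrightarrow X+Y$ and the operator transfer need only $\Phi\subseteq L^1_{loc}$, so that $\chi_{(0,\tau)}\in\Phi$ is used solely to produce the competitor $u=\varphi(\cdot)x$ witnessing $X\cap Y\subseteq[X,Y]_\Phi$; this also dovetails with part (i), which shows the condition $\chi_{(0,\tau)}\in\Phi$ cannot be dropped.
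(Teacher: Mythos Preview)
Your proof is correct and follows essentially the same route as the paper: part (i) via continuity of $u$ in $X+Y$ combined with $|u|_{X+Y}\le|u|_Y$ and the ideal property, and part (ii) via a compactly supported cut-off for $X\cap Y\hookrightarrow[X,Y]_\Phi$, the fundamental theorem of calculus for $[X,Y]_\Phi\hookrightarrow X+Y$, and the induced multiplication operator on $W^{1,\Phi}(X,Y)$ for the transfer of admissible operators.

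The one point where you diverge slightly from the paper is the embedding $[X,Y]_\Phi\hookrightarrow X+Y$: the paper, after reaching $|x|_{X+Y}\le C\|\dot u\|_{\Phi(X)}+|u(t)|_Y$ for $t\in(0,\tau)$, multiplies by $\chi_{(0,\tau)}$ and takes the $\Phi$-norm, thereby invoking the hypothesis $\chi_{(0,\tau)}\in\Phi$ a second time; you instead average the inequality over $(0,\tau)$ and use only the continuous embedding $\Phi\hookrightarrow L^1(0,\tau)$. Your variant is a shade more economical and justifies your closing remark that $\chi_{(0,\tau)}\in\Phi$ is needed solely to build the competitor $\varphi(\cdot)x$; in the paper's write-up that hypothesis is used for both embeddings.
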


\begin{proof}
(i) Let $x\in [X,Y]_{\Phi}$. By definition, there exists $u\in W^{1,1}_{loc} (X+Y)\cap \Phi (Y)$ such that $u(0)=x$. If $x\not= 0$, then, by continuity of $u$ with values in $X+Y$, there exist $\tau >0$, $c \geq 0$ such that
\[
  \chi_{(0,\tau )} \leq c\,|u|_{X+Y} \leq c\, |u|_{Y} \quad \text{a.e. on } (0,\infty ).
\]
This gives $\chi_{(0,\tau )}\in\Phi$.\\
(ii) First we show that, if $\chi_{(0,\tau )}\in \Phi$, then $X\cap Y \subseteq [X,Y]_{\Phi}\subseteq X+Y$ with continuous embeddings. Assume that $\chi_{(0,\tau )}\in\Phi$ for some $\tau >0$. Choose any function $\varphi\in C^1_c (\R_+ )$ such that $\varphi (0) = 1$ and $\supp \phi\subseteq [0,\tau ]$. Then for every $x\in X\cap Y$ one has $\varphi (\cdot ) \, x \in W^{1,\Phi} (X,Y)$, which implies $x\in [X,Y]_{\Phi}$ and $|x|_{[X,Y]_{\Phi}} \leq \| \varphi\|_{\Phi} \, |x|_Y + \| \dot\varphi \|_{\Phi} \, |x|_X \leq C\, |x|_{X\cap Y}$. For the second inclusion, since $\Phi$ is continuously embedded into $L^1_{loc}$, there exists a constant $C\geq 0$ such that
\[
 \| f \chi_{[0,\tau ]} \|_{L^1} \leq C \, \| f\chi_{[0,\tau ]} \|_{\Phi} \quad \text{for every } f\in\Phi. 
\]

Let $x\in [X,Y]_{\Phi}$. Let $u\in \mrew$ such that $u(0)=x$. Note that $x=x-u(t) + u(t) = - \int_0^t \dot u \, \ud s + u(t)$ ($t>0$). 
Consequently, 
\begin{eqnarray*}
 |x|_{X+Y} &\leq& |\int_0^t \dot u \, \ud s|_X + |u(t)|_Y\\
 &\leq& C \|\dot u\|_{\Phi(X)} + |u(t)|_Y \quad (t\in (0,\tau )).
\end{eqnarray*}
Therefore, 
\[
 \|\chi_{(0,\tau )}\|_{\Phi}\, |x|_{X+Y}  \leq C \|\dot u\|_{\Phi(X)}\, \|\chi_{(0,\tau )}\|_{\Phi} + \|u\|_{\Phi(Y)}, 
\]
which gives the desired claim.
 
Finally, in order to prove the interpolation property of $[X,Y]_\Phi$, let $T$ be an admissible operator with respect to the couple $(X,Y)$. Then $T$ induces in a natural way the bounded multiplication operator, again denoted by $T$, on $\mrew$ given by
\[
 (Tu) (t) := T u(t) \qquad (u\in\mrew , \, t\in\R_+ ) .
\]
This multiplication operator leaves the subspace $W^{1,\Phi}_0 (X,Y)$ invariant and thus induces a bounded operator on the quotient space $\mrew /W^{1,\Phi}_0 (X,Y)$. With the natural identification, this multiplication operator on the quotient space coincides with the original operator $T$. 
\end{proof}

\subsection{The $K$-method}

This section closely follows \cite{BrKr91}. Again, let $(X,Y)$ be an interpolation couple of Banach spaces and let $\Phi$ be a Banach function space.
Recall first the definition of the {\em $K$-functional}, that is, for every $x\in X+Y$ and $t>0$ put
\[
K(t,x):=\inf\left\{ |a|_X+t|b|_Y: a\in X, b\in Y, a+b=x \right\}.
\] 
With the help of the $K$-functional, we define the space
\[
K_\Phi(X,Y):= (X,Y)_{\Phi}:= \left\{ x \in X+Y: \; [(0,\infty)\ni t \mapsto t^{-1}K(t,x) ]\in \Phi 
\right\} ,
\]
and equip it with the norm
\[
|x|_{(X,Y)_\Phi}:=\left\| (\cdot)^{-1}K(\cdot, x) \right\|_\Phi \quad \quad \quad (x\in {(X,Y)_\Phi}).
\]
This definition of real interpolation spaces differs very slightly from the one in \cite[Section 3.3]{BrKr91}, where $\Phi$ runs through Banach function spaces containing the function $\min(1,\cdot)$, and $x\in K_\Phi(X,Y)$ iff $K(\cdot, x)\in \Phi$. The point is only the factor $\frac{1}{t}$ between the two definitions. The requirement $\min(1,\cdot)\in\Phi$ in the setting of \cite{BrKr91} is natural as the following lemma shows. In fact, $K_\Phi$ is an interpolation functor if and only if $\min(1,(\cdot)^{-1})\in \Phi$; cf. \cite[Proposition 3.3.1]{BrKr91}.
 
\begin{lemma}\label{K-lemma}
Let $\Phi$ be a Banach function space. Then the following conditions hold.
\begin{itemize}
 \item [(i)] 
If $(X,Y)_{\Phi} \not= \{ 0\}$, then $\min(1,(\cdot)^{-1})\in\Phi$.
 \item [(ii)]  If 
$\min(1,(\cdot)^{-1})\in\Phi$, then $(X,Y)_{\Phi}$ is an interpolation space relative to $(X,Y)$. 
\end{itemize}
\end{lemma}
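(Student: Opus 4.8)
The plan is to mirror the proof of Lemma \ref{T-lemma}, exploiting the standard properties of the $K$-functional. Recall that for fixed $x\in X+Y$ the map $t\mapsto K(t,x)$ is nonnegative, nondecreasing, concave, and hence continuous on $(0,\infty)$; moreover $K(t,x)\le \max(1,t/s)\,K(s,x)$ for all $s,t>0$, so that $t\mapsto t^{-1}K(t,x)$ is nonincreasing. In particular, if $x\ne 0$ then $K(t,x)>0$ for every $t>0$, since $K(t,x)=0$ would force $x=0$.

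For part (i), suppose $(X,Y)_\Phi\ne\{0\}$ and pick $x\in (X,Y)_\Phi$ with $x\ne 0$. Then $g:=(\cdot)^{-1}K(\cdot,x)\in\Phi$, and $g$ is a strictly positive, nonincreasing function on $(0,\infty)$. I would compare $g$ from below with a multiple of $\min(1,(\cdot)^{-1})$: on $(0,1]$ we have $\min(1,t^{-1})=1$ and $g(t)\ge g(1)>0$, so $\min(1,t^{-1})\le g(1)^{-1}g(t)$ there; on $[1,\infty)$ we have $\min(1,t^{-1})=t^{-1}$ and $g(t)=t^{-1}K(t,x)\ge t^{-1}K(1,x)$, so $\min(1,t^{-1})\le K(1,x)^{-1}g(t)$ there. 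Taking $c:=\max\{g(1)^{-1},K(1,x)^{-1}\}$ gives $\min(1,(\cdot)^{-1})\le c\,g$ pointwise a.e., and since $\Phi$ is an order ideal of $\cM$, this yields $\min(1,(\cdot)^{-1})\in\Phi$.

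For part (ii), assume $\min(1,(\cdot)^{-1})\in\Phi$. The first task is to show $X\cap Y\hookrightarrow (X,Y)_\Phi\hookrightarrow X+Y$ continuously. For the right-hand embedding, note $K(t,x)\ge t\,|x|_{X+Y}/(1+t)$ fails to be the cleanest bound; instead use that $K(t,x)\ge\min(1,t)\,|x|_{X+Y}$, hence $t^{-1}K(t,x)\ge\min(1,t^{-1})\,|x|_{X+Y}$, and apply $\|\cdot\|_\Phi$ to get $|x|_{X+Y}\le\|\min(1,(\cdot)^{-1})\|_\Phi^{-1}\,|x|_{(X,Y)_\Phi}$. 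For the left-hand embedding, use $K(t,x)\le\min(1,t)\,|x|_{X\cap Y}$ (split $x=x+0=0+x$), so $t^{-1}K(t,x)\le\min(1,t^{-1})\,|x|_{X\cap Y}$ and $|x|_{(X,Y)_\Phi}\le\|\min(1,(\cdot)^{-1})\|_\Phi\,|x|_{X\cap Y}$. Finally, for the interpolation property: if $T$ is admissible for $(X,Y)$ with norms $\le M$ on both $X$ and $Y$, the subadditivity and homogeneity of $K$ give the well-known estimate $K(t,Tx)\le M\,K(t,x)$ for all $t>0$, whence $\|(\cdot)^{-1}K(\cdot,Tx)\|_\Phi\le M\,\|(\cdot)^{-1}K(\cdot,x)\|_\Phi$, i.e. $T$ maps $(X,Y)_\Phi$ boundedly into itself with norm $\le M$. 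Combined with the two embeddings, this shows $(X,Y)_\Phi$ is an interpolation space relative to $(X,Y)$.

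I do not expect a genuine obstacle here; the only points requiring a little care are the elementary monotonicity/concavity properties of the $K$-functional (which are standard, cf. \cite{BrKr91}) and the observation that being an order ideal of $\cM$ is exactly what lets the pointwise comparisons in (i) and (ii) pass to membership and norm inequalities in $\Phi$. The estimate $K(t,Tx)\le M K(t,x)$ is the heart of the interpolation property and is immediate from the definition of $K$ together with admissibility of $T$.
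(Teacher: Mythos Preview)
Your argument for (i) and the embeddings and interpolation inequality in (ii) are essentially the paper's proof; in fact your pointwise comparison $\min(1,t^{-1})\le K(1,x)^{-1}\,t^{-1}K(t,x)$ is exactly the paper's inequality \eqref{K-ineq} (note $g(1)=K(1,x)=|x|_{X+Y}$, so your two constants coincide), and your bound $K(t,Tx)\le M\,K(t,x)$ is the paper's \eqref{interp norm}.

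There is, however, one genuine omission. To conclude that $(X,Y)_\Phi$ is an interpolation space you must also verify that it is a \emph{Banach} space, and the paper does prove completeness explicitly: given an absolutely convergent series $\sum_n x_n$ in $(X,Y)_\Phi$, it uses \eqref{K-ineq} to see that the series converges in $X+Y$ to some $x$, uses completeness of $\Phi$ on the scalar series $\sum_n(\cdot)^{-1}K(\cdot,x_n)$, and then combines the triangle inequality for $K(t,\cdot)$ with the ideal property of $\Phi$ (together with \cite[Lemma~3.3.2]{BrKr91} on a.e.\ convergence) to place $x$ in $(X,Y)_\Phi$ and obtain convergence there. Your proposal skips this step entirely; the embeddings and the operator estimate alone do not give completeness, so as written your (ii) is incomplete.
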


The proof is standard, we include details for the convenience of the reader.

\begin{proof} $(i)$ Note that for every $x\in X+Y$ we have 
\begin{equation}\label{K-ineq}
 \min(1,t^{-1}) \, |x|_{X+Y} \leq t^{-1}K(t,x) \quad (t>0).
\end{equation}
Therefore, the claim follows from the order ideal property of $\Phi$.\\ 
$(ii)$ First note that $X\cap Y\subseteq (X,Y)_{\Phi} \subseteq X+Y$ continuously. Indeed, the first embedding follows directly from   
\[
t^{-1}K(t,x)\leq \min(1,t^{-1}) \, |x|_{X\cap Y} \quad (t>0, x\in X\cap Y),
\] and the second one, from \eqref{K-ineq} above.

Let us show that $(X,Y)_{\Phi}$ is a Banach space. Let $\sum_n x_n$ be an absolutely convergent series in $(X,Y)_{\Phi}$. We have to show that this series is convergent. By definition of the norm in $(X,Y)_{\Phi}$, the series $\sum_n (\cdot )^{-1} K(\cdot ,x_n)$ is absolutely convergent in $\Phi$, and by \eqref{K-ineq}, the series $\sum_n x_n$ is absolutely convergent in $X+Y$. The spaces $\Phi$ and $X+Y$ being complete, the preceding two series converge to some elements $\phi\in\Phi$ and $x\in X+Y$, respectively. Since, for every $t>0$, $K(t, \cdot)$ is a norm on $X+Y$, the triangle inequality yields, for every $n\in\N$,  
\[
(\cdot)^{-1}K(\cdot, x - \sum_{m=1}^n x_m) \leq \sum_{m>n} (\cdot)^{-1}K(\cdot, x_m ) \text{ everywhere on } (0,\infty ).
\]
By \cite[Lemma 3.3.2]{BrKr91}, the series $\sum_n (\cdot )^{-1} K(\cdot ,x_n)$ converges pointwise almost everywhere to $\phi$, and hence the right-hand side of the inequality above can be estimated by $\phi$. The ideal property of $\Phi$ thus implies $x\in (X,Y)_{\Phi}$. The convergence of $\sum_n x_n$ to $x$ in $(X,Y)_{\Phi}$ then follows from the above inequality and absolute convergence of the series $\sum_n (\cdot )^{-1} K(\cdot ,x_n)$ in $\Phi$.

Finally, let $T$ be an admissible operator relative to $(X,Y)$. We show that $T$ is bounded on $(X,Y)_{\Phi}$ with the operator norm 
\begin{equation}\label{interp norm}
 |T|_{\cL((X,Y)_{\Phi})} \leq \max(|T|_{\cL(X)},|T|_{\cL(Y)}).
\end{equation}
If $x\in (X,Y)_{\Phi}$, then for every $a\in X$ and $b\in Y$ such that $a+b=x$, we have that 
\[
|Ta|_X + t|Tb|_Y \leq
 \max(|T|_{\cL(X)},|T|_{\cL(Y)}) \, \left( |a|_X + t|b|_Y \right), \quad \quad (t>0).
\]
Consequently, 
\[
K(t, Tx)\leq \max(|T|_{\cL(X)},|T|_{\cL(Y)}) \, K(t, x), 
\]
which completes the proof. 
\end{proof}

\subsection{Equivalence of trace method and $K$-method}
Let $p\in [1,\infty )$ and $\theta\in (0,1)$. In the case $\Phi = L^p(t^{p(1-\theta) - 1}\ud t)$ the spaces $[X,Y]_\Phi$ and $(X,Y)_\Phi$ both yield the classical real interpolation space which is usually denoted by $(X,Y)_{\theta,p}$; see, for example, \cite[Proposition 1.13]{Lu09}, \cite[Theorem 1.8.2, p.~44]{Tr95}. Our next aim is to show that the equality $[X,Y]_\Phi = (X,Y)_\Phi$ still holds under rather 'general' assumptions on $\Phi$.

Let $P$ be the {\em Hardy operator} and $Q$ its (formal) adjoint, that is, the integral operators given by 
\[
Pf(t):=\frac{1}{t}\int_0^t f (s)\,\ud s, \quad\quad  Qf(t):= \int_t^\infty f (s)\,\frac{\ud s}{s}
\]
for all $f\in\cM$ such that the respective integrals exist for all $t\in (0,\infty )$. The maximal domain of $P$ is hence the space $L^1_{loc}$, while the maximal domain of $Q$ is the set of all measurable functions $f\in\cM$ such that $f\chi_{(\tau ,\infty )} \in L^1 ( t^{-1}\, \ud t)$ for every $\tau >0$. By changing the order of integration one obtains 
\begin{equation*} 
 \int_0^\infty f Qg \,\ud t = \int_0^\infty  g Pf\,\ud t
\end{equation*}
for all $f$, $g\in\cM$ such that $Pf$, $Qg$ and the above integrals exist. \\

Let $\mathfrak{L}_P$ (resp. $\mathfrak{L}_{P+Q}$) denote the class of Banach function spaces $\Phi$ such that the Hardy operator $P$ (resp. the Calder\'on operator $P+Q$) is bounded on $\Phi$. Note that if $\Phi \in \mathfrak{L}_P$, then $\Phi \subseteq L_{loc}^1$, and $\chi_{(0,1)}\in \Phi$ if and only if $\min(1,\frac{1}{(\cdot)})\in \Phi$. Moreover, by Lemmas \ref{T-lemma} (i) and \ref{K-lemma} (i), if $\chi_{(0,1)}\notin \Phi$, then $[X,Y]_\Phi =\{0\}= (X,Y)_\Phi$. 

The following lemma shows that the functors $T_\Phi$ and $K_\Phi$ coincide for $\Phi \in \mathfrak{L}_P$.

\begin{theorem}\label{trace=interp}
For every Banach function space $\Phi\in \mathfrak{L}_P$ and every interpolation Banach couple $(X,Y)$, $[X,Y]_\Phi = (X,Y)_\Phi$. 
\end{theorem}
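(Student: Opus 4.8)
The plan is to prove the two inclusions $[X,Y]_\Phi \subseteq (X,Y)_\Phi$ and $(X,Y)_\Phi \subseteq [X,Y]_\Phi$ separately, as is standard for the equivalence of trace and $K$-method, but being careful to use only the boundedness of the Hardy operator $P$ on $\Phi$ rather than any special structure of $\Phi$. Throughout I may assume $\chi_{(0,1)}\in\Phi$, since otherwise both spaces are trivial by the remark following the definition of $\mathfrak{L}_P$, and in that case there is nothing to prove. Note also that $\Phi\in\mathfrak{L}_P$ gives $\Phi\subseteq L^1_{loc}$ continuously, so all the constructions in Section~\ref{sec.definition} are available.

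For the inclusion $[X,Y]_\Phi\subseteq (X,Y)_\Phi$: given $x=u(0)$ with $u\in W^{1,\Phi}(X,Y)$, I would estimate the $K$-functional directly. For $t>0$, write $x = \bigl(x-u(t)\bigr) + u(t) = -\int_0^t \dot u(s)\,\ud s + u(t)$, the first term lying in $X$ and the second in $Y$. This yields
\[
  K(t,x) \le \Bigl|\int_0^t \dot u(s)\,\ud s\Bigr|_X + t\,|u(t)|_Y \le t\,(P|\dot u|_X)(t) + t\,|u(t)|_Y,
\]
hence $t^{-1}K(\cdot,x) \le P(|\dot u|_X) + |u|_Y$ pointwise on $(0,\infty)$. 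The right-hand side lies in $\Phi$: the second summand because $u\in\Phi(Y)$, the first because $|\dot u|_X\in\Phi$ and $P$ is bounded on $\Phi$. Taking norms and then the infimum over admissible $u$ gives $|x|_{(X,Y)_\Phi} \le C\,|x|_{[X,Y]_\Phi}$ with $C = \max(1,\|P\|_{\cL(\Phi)})$.

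For the reverse inclusion $(X,Y)_\Phi\subseteq [X,Y]_\Phi$, which I expect to be the more delicate step, I would take $x\in X+Y$ with $g:=t^{-1}K(\cdot,x)\in\Phi$ and construct an explicit extension $u\in W^{1,\Phi}(X,Y)$ with $u(0)=x$. The classical recipe (see e.g.\ Lunardi \cite{Lu09} or Triebel \cite{Tr95}) is to choose, for each $t>0$, a near-optimal decomposition $x=a(t)+b(t)$ with $a(t)\in X$, $b(t)\in Y$ and $|a(t)|_X + t\,|b(t)|_Y \le 2K(t,x)$, and then to mollify, setting for instance $u(t):=\frac{1}{t}\int_t^{2t} a(s)\,\ud s$ (or a fixed smooth average of the $a$'s), so that $u(t)\to x$ in $X+Y$ as $t\to 0^+$, $\dot u$ can be computed and controlled, and both $|u(t)|_Y$ and $|\dot u(t)|_X$ are pointwise dominated by a constant times $t^{-1}K(t,x) = g(t)$ together with an averaging (Hardy-type) operator applied to $g$. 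The membership $u\in\Phi(Y)$ and $\dot u\in\Phi(X)$ then follows from $g\in\Phi$ and boundedness of $P$ (and of the dilation $g\mapsto g(2\cdot)$, which is automatic on a Banach function space with the stated control, or can be absorbed into $P$). The upshot is $|x|_{[X,Y]_\Phi}\le C'\,|x|_{(X,Y)_\Phi}$.

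The main obstacle is the second inclusion: one must produce a genuinely \emph{differentiable} (or at least $W^{1,1}_{loc}$) $X+Y$-valued function $u$ from the merely pointwise-defined, generally non-measurable family of optimal decompositions $a(t),b(t)$, while keeping all estimates expressible through the single scalar function $g$ and the operator $P$. The standard device is to average over a dyadic window and exploit the subadditivity and monotonicity properties of $K(t,x)$ (namely $K(t,x)\le \max(1,t/s)K(s,x)$) to bound the averaged quantities; measurability of $t\mapsto a(t)$ can be arranged by a measurable-selection argument or, more simply, by replacing the exact $a(t)$ with a piecewise-constant-in-$\log t$ choice on dyadic intervals, which is enough since we only need near-optimality up to a multiplicative constant. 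Once these technical points are handled, everything reduces to the single hypothesis $P\in\cL(\Phi)$, which is exactly the defining property of $\mathfrak{L}_P$.
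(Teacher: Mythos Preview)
Your argument for $[X,Y]_\Phi\subseteq (X,Y)_\Phi$ is exactly the paper's. For the reverse inclusion your strategy is also the paper's, but there is a slip in the construction: the function $u(t)=\frac{1}{t}\int_t^{2t}a(s)\,\ud s$ tends to $0$ in $X$, not to $x$, since $|a(s)|_X\le 2K(s,x)\to 0$. You want $u$ built from the $Y$-components, i.e.\ $u(t)=\frac{1}{t}\int_t^{2t}b(s)\,\ud s$ (equivalently $u(t)=x-\frac{1}{t}\int_t^{2t}a(s)\,\ud s$). With this correction everything goes through: $u(t)\to x$ in $X+Y$, and the monotonicity $K(2t,x)\le 2K(t,x)$ you already invoke gives both $|u(t)|_Y\le C\,t^{-1}K(t,x)$ and $|\dot u(t)|_X\le C\,t^{-1}K(t,x)$ pointwise, so no dilation boundedness is actually needed.

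The paper's concrete construction is a minor variant of yours. Rather than averaging over $(t,2t)$, it fixes a piecewise-constant selection $v(t)=b_{n+1}$ on $(\tfrac{1}{n+1},\tfrac{1}{n}]$ (with $a_n+b_n=x$ near-optimal at $t=1/n$) and sets $u:=Pv$, i.e.\ $u(t)=\frac{1}{t}\int_0^t v$. This places the Hardy operator directly in the definition of $u$, so $\|u\|_{\Phi(Y)}\le\|P\|_{\cL(\Phi)}\|v\|_{\Phi(Y)}$ is immediate, while $|\dot u(t)|_X\le 4\,t^{-1}K(t,x)$ follows pointwise from monotonicity of $K$ alone. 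The discrete, piecewise-constant choice also disposes of the measurable-selection issue you flagged without any further argument. Your dyadic-window average and the paper's Ces\`aro average are equally valid routes; the paper's has the small advantage of making both the measurability and the role of $P$ completely explicit.
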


\begin{proof}[Proof of Theorem \ref{trace=interp}] Let $(X,Y)$ be an interpolation Banach couple. 
Fix $x\in (X,Y)_\Phi$. We show that there exists $u\in \mrew$ such that $u(0)=x$. For each $n\in \N$ there exist $a_n\in X$ and $b_n\in Y$ such that 
\[
a_n+b_n=x,\quad\quad |a_n|_{X} + n^{-1} |b_n|_Y \leq 2 K(n^{-1}, x).
\]
Set 
\[
v(t):=\sum_{n=1}^\infty b_{n+1} \chi_{(\frac{1}{n+1}, \frac{1}{n}]}(t),\quad\quad 
u(t):=\frac{1}{t}\int_0^t v  \, \ud s\quad (t>0).
\]
By monotonicity, $\lim_{t\rightarrow 0^+}K(t,x)$ exists, and since $\Phi \subseteq L^1_{loc} (\R_+)$ continuously, we find necessarily that $\lim_{t\rightarrow 0^+} K(t,x) = 0$. Thus, $\lim_{n\to\infty} |a_n|_X = 0$, and  $|x-b_n|_{X+Y}\leq 
|a_n|_{X} \rightarrow 0$ as $n\rightarrow \infty$. Consequently, $x=\lim_{t\rightarrow 0^+} v(t) = \lim_{t\rightarrow 0^+} u(t)$ in $X+Y$. Next, for every $t>0$,
\[
|v(t)|_Y\leq 2\, \sum_{n=1}^\infty (n+1) K\left((n+1)^{-1},x\right) \chi_{(\frac{1}{n+1},\frac{1}{n}]}(t)  \leq 4t^{-1} \, K(t, x) .
\]
Therefore, $|v(\cdot)|_Y\in \Phi$, and $\|v\|_{\Phi(Y)}\leq 4|x|_{(X,Y)_{\Phi}}$. 
Since, $|u(\cdot )|_Y\leq P(|v(\cdot)|_Y)$ on $(0,\infty )$, by the definition of $\mathfrak{L}_P$, we get $u\in {\Phi(Y)}$ and 
$\|u\|_{\Phi(Y)}\leq 4\|P\|_{\cL (\Phi)} |x|_{(X,Y)_{\Phi}}$.

Let $g(t):=\sum_{n=1}^\infty a_{n+1} \chi_{(\frac{1}{n+1},\frac{1}{n}]}(t)$. Then $u(t)=x- \frac{1}{t} \int_0^t g \,\ud s$, and 
\[
\dot u(t)= \frac{1}{t^2}\int_0^t g \, \ud s - \frac{1}{t}g(t),
\] 
where  the derivative exists for almost every $t>0$ in $X$. Since $|g(t)|_X\leq 2\, K(t, x)$, we get 
\[
|\dot u(t)|_X \leq t^{-1} \, [ \sup_{0<s<t}|g(s)|_X + |g(t)|_X ] \leq 4 t^{-1}\, K(t,x)\quad (t>0).
\]
Consequently, $\|\dot u\|_{\Phi(X)}\leq 4 \, |x|_{(X,Y)_{\Phi}}$. Thus, $x\in [X,Y]_{\Phi}$ with $$|x|_{[X,Y]_{\Phi}}\leq 4\max(\|P\|_{\cL (\Phi )},1) \, |x|_{(X,Y)_{\Phi}}.$$

To show the converse inclusion, let $x\in [X,Y]_{\Phi}$. Let $u\in \mrew$ such that $u(0)=x$. Note that $x=x-u(t) + u(t) = - \int_0^t \dot u \, \ud s + u(t)$ ($t>0$). 
Consequently, 
\[
K(t,x)\leq \left|\int_0^t \dot u \,\ud s\right|_X + t|u(t)|_Y\quad \quad (t>0).
\] 
Therefore, we get  
$|x|_{(X,Y)_{\Phi}} \leq \|P\|_{\cL (\Phi )} \, \|\dot u\|_{\Phi(X)} +
\| u\|_{\Phi(Y)}$, that is, $x\in (X,Y)_{\Phi}$. The proof is complete.
\end{proof}

\begin{remark} \label{rem.hardy}
(a) Following \cite{LiTz79}, we define the {\em lower} and {\em upper Boyd indices} of a Banach function space $\Phi$ respectively by 
\begin{equation*}
p_\Phi  := \lim_{t\to \infty} \frac{\log t}{\log h_\Phi (t)}  \quad \text{ and }\quad 
q_\Phi  := \lim_{t\to 0+} \frac{\log t}{\log h_\Phi (t)},
\end{equation*}
where $h_{\Phi}(t)=\|D_t\|_{\cL (\Phi )}$ and $D_t: \Phi\rightarrow\Phi$ ($t>0$) is the {\em dilation operator} defined by 
\[
D_tf(s)=f(s/t), \qquad 0<t<\infty, \quad f\in \Phi .
\]
Of course, this definition of the Boyd indices is admissible only if the dilation operators are well defined and bounded on $\Phi$. One always has $1\le p_\Phi\le q_\Phi\le\infty$; see, for example, \cite[Proposition 5.13, p.~149]{BeSh88}, where the Boyd indices are defined as the reciprocals with respect to our definitions. The Boyd indices can be computed explicitly for many examples of concrete rearrangement invariant Banach function spaces. For example, for the Lebesgue spaces $\Phi = L^p$ or the Lorentz spaces $\Phi = L^{p,q}$ one has $p_\Phi = q_\Phi = p$; see \cite[Chapter 4]{BeSh88}. We refer the reader primarily to the monographs by Bennett \& Sharpley \cite{BeSh88} and Lindenstrauss \& Tzafriri \cite{LiTz79} for more background on rearrangement invariant Banach function spaces. 

By \cite[Theorem 5.15, p.~150]{BeSh88}, the Hardy operator $P$ (resp. $P+Q$) is bounded on a rearrangement invariant Banach function space $\Phi$ if and only if $p_\Phi>1$ (resp. $1<p_\Phi \leq q_\Phi<\infty$). A close analysis of the proof of this result shows that the condition $p_\Phi>1$ actually implies $\Phi\in\mathfrak{L}_P$ for all Banach function spaces $\Phi$ with norming associate space $\Phi'$. By the Lorentz-Luxemburg theorem \cite[Theorem 2.7, p.10]{BeSh88}, 
every Banach function space with Fatou property has a norming associate space.

(b) An interpolation space $Z$ with respect to the interpolation couple $(X,Y)$ is a {\em $K$-monotone interpolation space} if there exists a constant $C\geq 0$ such that the inequality $K(\cdot ,y)\leq K(\cdot ,x)$ for $x\in Z$ and $y\in X+Y$ implies that $y\in Z$, and $|y|_{Z}\leq C \, |x|_{Z}$. By \cite[Theorem 3.3.20]{BrKr91}, for any $K$-monotone interpolation space $Z$ there exists a Banach function space $\Phi$ such that 
\[ 
Z= \{ x\in X: (\cdot )^{-1} K(\cdot, x)\in \Phi \}
\] 
and $|x|_{Z}\sim \| (\cdot )^{-1} K(\cdot, x)\|_{\Phi}$ ($x\in Z$). It is natural to ask whether for a general Banach function space $\Phi$, which embeds continuously into $L^1_{loc}$ and for which $\min(1, (\cdot)^{-1})\in \Phi$, the trace space $[X,Y]_{\Phi}$ is a $K$-monotone interpolation space, too. 
\end{remark}

\section{The functional calculus representation of generalised real interpolation spaces} \label{sec.functional.calculus}

Let $X$ be a Banach space. A linear, not necessarily densely defined operator $A$ on $X$ is {\em sectorial of angle $\phi\in (0,\pi )$} if $\sigma(A)\subseteq \bar S_\phi$ and $\sup_{\lambda\in \C\setminus S_{\phi'}}|\lam R(\lambda, A)|_{\cL(X)} <\infty$ for every $\phi'\in(\phi,\pi)$. Here, $S_\phi$ stands for the open sector $\{z\in \C: z\neq 0, |\arg z|<\phi\}$. We simply say that $A$ is {\em sectorial} if it is sectorial of some angle $\phi\in (0,\pi )$. For a sectorial operator $A$ we define the {\em angle of sectoriality} by $\phi_A:=\inf \bigl\{\phi>0: A$ is sectorial of angle $\phi \bigr\}$.

In this section, we provide an extension of the functional calculus representation of the classical real interpolation spaces $(X, \dom{A})_{\theta,q}$ to the real interpolation spaces $(X, \dom{A})_{\Phi}$. As in the case of the results from the previous section, the main ingredient in the proof of this representation is the boundedness of the Hardy operator $P$ and its adjoint. We refer the reader to Haase \cite{Hs06} for the background on the holomorphic functional calculus of sectorial operators. In fact, following \cite{Hs06}, let $H^\infty(S_\phi)$ ($\phi\in (0,\pi)$) be the algebra of all bounded, holomorphic functions on $S_\phi$, and consider the following subalgebras of $H^\infty(S_\phi)$: 
\begin{align*}
 H^\infty_0(S_\phi) & := \{ f\in H^\infty(S_\phi): \exists C, \, s>0 \text{ s.t. } |f(z)|\leq C \min (|z|^s, |z|^{-s})\;(z\in S_\phi) \} , \\
 \cE(S_\phi) & := H^\infty_0(S_\phi) \oplus \big< (1+z)^{-1}\bigr> \oplus \bigl< 1 \bigr >.
\end{align*}
Assume that $A$ is sectorial. Then for every $f\in H^\infty_0 (S_{\phi} )$ ($\phi\in (\phi_A ,\pi )$) one defines
\[
 f(A) := \frac{1}{2\pi i} \int_{\partial S_\phi} f(z) R(z,A) \; \ud z ,
\]
and if $f = f_0 + \frac{\lambda}{1+\cdot} + \mu \in \cE (S_\phi )$ ($f_0\in H^\infty_0 (S_\phi )$, $\lambda$, $\mu\in\C$), then
\[
 f(A) := f_0 (A) + \lambda \, (I+A)^{-1} + \mu .
\]
In this way one obtains an algebra homomorphism $\cE (S_\phi ) \to \cL (X)$, $f\mapsto f(A)$, which is called the $\cE$-primary (holomorphic) functional calculus. 

In \cite[Theorem 6.2.1]{Hs06}, Haase proved the following generalisation of a classical result due to Komatsu: \emph{for every $\Re \alpha>0$ 
\[
{(X,\dom{A^\alpha})_{\theta, q}}= \left\{ x\in X: \left( \int_0^\infty |t^{-\theta}\psi(tA)x|^q_X \frac{\ud t}{t}\right)^{1/q}<\infty\right\}
\] 
and the norm $|\cdot|_{(X,\dom{A^\alpha})_{\theta,q}}$ is equivalent to $|\cdot|_X+ \left( \int_0^\infty |t^{-\theta}\psi(tA) \cdot |^q_X \frac{\ud t}{t}\right)^{1/q}$.} Here, $\psi$ is a bounded holomorphic function satisfying some growth conditions depending on $\alpha$. For instance, $\psi(z)= z^\alpha (1+z)^{- \alpha}$ is an appropriate function and the choice of this particular function corresponds to Komatsu's original result \cite{Ko67,Ko72}. 

The following result extends the above characterisation to general real interpolation spaces. Recall that $\mathfrak{L}_P$ denotes the class of Banach function spaces $\Phi$ such that the Hardy operator $P$ is bounded on $\Phi$.

\begin{theorem}\label{fcrepresent} 
Let $A$ be sectorial and $\phi \in (\phi_A, \pi)$. Assume that $\psi$ is a holomorphic function on $S_\phi$ such that the following properties hold:
\begin{itemize}
 \item [(i)] $\psi$, $(\cdot)^{-1}\psi\in \cE(S_\phi)$; 
\item [(ii)] $\lim_{z\rightarrow 0}z^{-1}\psi(z)\neq 0$ and $\psi(z)\neq 0$ for all $z\in S_\phi$, and 
\[ 
\sup_{z\in S_\phi, s\geq 1} \left| \frac{\psi(sz)}{s \psi(z)} \right|<\infty.
\]
\end{itemize}

 Then, for every $\Phi \in \mathfrak{L}_P$ with $\min(1,\frac{1}{(\cdot)})\in \Phi$, 
\[ 
{(X,\dom{A})_\Phi}= \bigl\{ x\in X: \left[(0,\infty)\ni t \mapsto |t^{-1}\psi(tA)x|_X\, \right] \in \Phi \bigr\}
\] 
and  
\[
|x|_{(X,\dom{A})_\Phi} \simeq |x|_X+ \bigl\|(\cdot)^{-1}|\psi(\cdot A)x|_X\bigr\|_{\Phi}\quad  \bigl(x\in (X,\dom{A})_{\Phi}\bigr).
\]
If, in addition, $A$ is invertible, then 
\[
|x|_{(X,D_A)_\Phi} \simeq \left\|(\cdot)^{-1}|\psi(\cdot A)x|_X\right\|_{\Phi}\quad (x\in (X,\dom{A})_\Phi).
\]
\end{theorem}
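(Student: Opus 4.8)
The plan is to prove the characterization by a double inclusion, exploiting the equivalence $(X,\dom{A})_\Phi = [X,\dom{A})]_\Phi$ from Theorem \ref{trace=interp}, and then reducing everything to norm estimates on $\Phi$ that invoke the boundedness of the Hardy operator $P$ and (implicitly) its adjoint $Q$. Write $\|x\|_\psi := \|(\cdot)^{-1}|\psi(\cdot A)x|_X\|_\Phi$ for the candidate seminorm and let $N_\Phi$ denote the right-hand side space. The first step is the easy inclusion $(X,\dom{A})_\Phi \subseteq N_\Phi$: given $x = a+b$ with $a\in X$, $b\in\dom{A}$, one uses $\psi, (\cdot)^{-1}\psi \in \cE(S_\phi)$ to get $\|\psi(tA)a\|_X \leq C\min(1, t)\,\|a\|_X$ (since $(\cdot)^{-1}\psi$ bounded on $S_\phi$ forces $\psi$ to vanish at $0$ at rate $t$) and $\|\psi(tA)b\|_X = \|t(tA)^{-1}\psi(tA)(Ab)\|_X \cdot t^{-1}\cdots$ — more precisely $\psi(tA)b = t\cdot[(\cdot)^{-1}\psi](tA)\,(tA)\cdot\tfrac1t\cdots$; the clean statement is $\|\psi(tA)b\|_X \leq C\min(t, t^{\,?})\|b\|_{\dom A}$ using both that $\psi$ is bounded and that $(\cdot)^{-1}\psi(tA)$ applied to $tAb$ is controlled. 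Optimizing over the decomposition $a+b=x$ and using $t^{-1}\|\psi(tA)x\|_X \lesssim \min(1,t^{-1})\,t^{-1}K(t,x)\cdot(\text{const})$ — or rather $t^{-1}\|\psi(tA)x\|_X \lesssim t^{-1}K(t,x)$ directly — gives, via the ideal property of $\Phi$, that $\|x\|_\psi \lesssim |x|_{(X,\dom A)_\Phi}$, and $\|x\|_X \lesssim |x|_{(X,\dom A)_\Phi}$ is the standard embedding $(X,\dom A)_\Phi \hookrightarrow X+\dom A$ combined with $\min(1,(\cdot)^{-1})\in\Phi$.

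The second, substantial step is the reverse inclusion $N_\Phi \subseteq (X,\dom{A})_\Phi = [X,\dom A]_\Phi$. Here one constructs an explicit extension $u\colon\R_+\to X+\dom A$ with $u(0)=x$, $u\in\Phi(\dom A)$, $\dot u\in\Phi(X)$, following the Komatsu-type reproducing-formula strategy used in \cite[Thm 6.2.1]{Hs06}. Since $\psi,(\cdot)^{-1}\psi\in\cE(S_\phi)$ and $\psi$ is nonvanishing with $\lim_{z\to0}z^{-1}\psi(z)\neq0$, one can find (via the functional calculus / a Calderón-type reproducing identity) a companion function so that $x$ is recovered from the family $t\mapsto\psi(tA)x$ by an averaging integral; one sets, schematically, $u(t) := c\int_0^\infty \Theta(t,s)\,\psi(sA)x\,\tfrac{\ud s}{s}$ (or a discrete/dyadic analogue) with $\Theta$ chosen so that $u(0)=x$. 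The point of hypothesis (ii), the scaling bound $\sup_{z\in S_\phi, s\geq1}|\psi(sz)/(s\psi(z))|<\infty$, is precisely that it lets one transfer the pointwise bound on $t\mapsto t^{-1}\|\psi(tA)x\|_X$ to bounds on $\|Au(t)\|_X$ and $\|\dot u(t)\|_X$ after changing variables in the defining integral, producing a pointwise domination of $\|Au(t)\|_X$ and $t\|\dot u(t)\|_X$ by $P$ (and $Q$) applied to the function $g(s) := s^{-1}\|\psi(sA)x\|_X$. Because $\Phi\in\mathfrak{L}_P$, and since the adjoint estimate is handled either by the same class after noting $Q$ is also controlled under the standing hypotheses (or by splitting the kernel into its Hardy part and its "tail" part with an extra decay factor absorbed into $\min(1,(\cdot)^{-1})$), one concludes $u\in\Phi(\dom A)$ and $\dot u\in\Phi(X)$ with norms $\lesssim \|x\|_X + \|x\|_\psi$. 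This yields $x\in[X,\dom A]_\Phi$ with the asserted norm bound.

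The final assertion — when $A$ is invertible, $|x|_{(X,\dom A)_\Phi}\simeq\|x\|_\psi$, i.e. the term $\|x\|_X$ is superfluous — follows by showing $\|x\|_X \lesssim \|x\|_\psi$ under invertibility. The idea: invertibility of $A$ means $0\in\rho(A)$, so $\psi$ may be treated as a nondegenerate function near both $0$ and $\infty$ relative to $\sigma(A)$, and one has a reproducing formula $x = \int_0^\infty \phi(sA)\psi(sA)x\,\tfrac{\ud s}{s}$ for a suitable $\phi\in H^\infty_0$; estimating $\|x\|_X \leq \int_0^\infty \|\phi(sA)\|\,\|\psi(sA)x\|_X\,\tfrac{\ud s}{s} = \int_0^\infty \|\phi(sA)\|_{\cL(X)}\, s\cdot g(s)\,\tfrac{\ud s}{s}$ and using that $\|\phi(sA)\|_{\cL(X)}\lesssim\min(s^\delta,s^{-\delta})$ (so that $s\mapsto s\|\phi(sA)\|_{\cL(X)}$ defines, by changing variables, a kernel whose action is dominated by $P+Q$ on the dilation-invariant measure), one gets $\|x\|_X\lesssim\|(P+Q)g\|_{L^1(\ud t/t)}$-type control — but to stay inside $\Phi$ one instead writes $\|x\|_X = \|x\|_X\cdot\|\min(1,(\cdot)^{-1})\|_\Phi/\|\min(1,(\cdot)^{-1})\|_\Phi$ and bounds the scalar $\|x\|_X$ by a Hölder-type pairing of $g$ against the kernel, which lies in the associate space by the decay estimate; alternatively, and more cleanly, one notes that invertibility makes $\dom A$ and the graph norm equivalent to $\|A\cdot\|_X$, rerun the trace construction so that $u$ can be chosen with $u(t)\to x$ and the lower-order term drops out of the quotient norm.

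The main obstacle is the second step: setting up the reproducing/extension formula so that, after the change of variables dictated by hypothesis (ii), the bounds on $u(t)$ and $\dot u(t)$ are genuinely pointwise dominated by $P$ (and a $Q$-type tail) applied to $g$, rather than merely dominated in some weaker averaged sense — in particular handling the contribution near $t=0$ and $t=\infty$ of the kernel $\Theta$ so that everything is controlled by operators bounded on $\Phi\in\mathfrak{L}_P$, without needing the full Calderón operator unless it happens to be available. Matching the exact hypotheses (i)–(ii) to exactly the operators one is allowed to use is the delicate bookkeeping.
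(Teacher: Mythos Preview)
Your easy inclusion $(X,\dom{A})_\Phi \subseteq N_\Phi$ is correct in spirit (though the exposition of the $b$-term is garbled): one writes $\psi(tA)x = \psi(tA)a + t\,[(\cdot)^{-1}\psi](tA)\,Ab$ and uses the uniform boundedness of both $\cE$-functions to get $|\psi(tA)x|_X \leq C(|a|_X + t|b|_{\dom A})$, hence $t^{-1}|\psi(tA)x|_X \leq C\,t^{-1}K(t,x)$. This matches the paper.

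For the hard inclusion, however, you go a different route from the paper and, more importantly, you do not locate the mechanism that makes $\Phi\in\mathfrak{L}_P$ (rather than $\mathfrak{L}_{P+Q}$) sufficient. Your plan is to build a trace extension $u$ via a Calder\'on-type reproducing formula over $(0,\infty)$; such a formula naturally produces both a $P$-piece and a $Q$-piece, which is exactly the obstacle you flag at the end. The paper avoids this entirely by working directly with the $K$-functional and using a \emph{one-sided} reproducing identity (\cite[Lemma~6.2.7]{Hs06}): under hypotheses (i)--(ii) there exist $f\in H^\infty_0(S_\phi)$ and $g\in\cE(S_\phi)$ with
\[
\int_0^1 (f\psi)(sz)\,\frac{\ud s}{s} + g(z)\psi(z)z^{-1} = 1,
\]
so that $x = h_1(tA)x + h_2(tA)x$ with $h_2(tA)x\in\dom A$. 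The integral in $h_1$ runs only over $[0,1]$, so after substitution it becomes $\int_0^t$ and produces a pure Hardy-operator bound $t^{-1}|h_1(tA)x|_X \leq C\,P\bigl((\cdot)^{-1}|\psi(\cdot A)x|_X\bigr)(t)$; the remainder $h_2$ is handled \emph{pointwise}, with $|Ah_2(tA)x|_X \leq C\,t^{-1}|\psi(tA)x|_X$ and $|h_2(tA)x|_X \leq C|x|_X$. Hypothesis (ii) enters precisely here, via \cite[Proposition~2.6.11]{Hs06}, to guarantee the estimates $|(f\psi)(tA)x|_X \leq C|\psi(tA)x|_X$ and $|g(tA)\psi(tA)x|_X \leq C|\psi(tA)x|_X$. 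No $Q$-term ever appears. This is the structural point your proposal is missing; the full Calder\'on operator is needed only in Theorem~\ref{fcrepresent2}, where hypothesis (ii) is dropped.

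For the invertible case you are overcomplicating things. When $A$ is invertible, $|\cdot|_{\dom A}\simeq |A\cdot|_X$, so in the $K$-functional bound one simply drops the lower-order term $|h_2(tA)x|_X$ from $t|h_2(tA)x|_{\dom A}$; the term $\min(1,t^{-1})|x|_X$ then disappears from the estimate and one gets $|x|_{(X,\dom A)_\Phi} \lesssim \|x\|_\psi$ directly. No separate reproducing formula or pairing argument is needed.
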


\begin{remark} \label{rem.kk}
Theorem \ref{fcrepresent} extends a result by Kalton and Kucherenko in \cite{KaKu10}; see \cite[Theorem 5.4]{KaKu10} (for $\sigma=0, \tau = 1$). Our approach differs from the one in \cite{KaKu10} in a few points. First, our notion of sectorial operator $A$ does not require $A$ to be one-to-one, nor does it require that the domain $\dom{A}$ and the range $\rg{A}$ are dense in $X$. Second, our assumption on the Banach function space $\Phi$ does not require that the space $L^\infty_{c}$ of essentially bounded functions with compact support in $(0,\infty )$ is dense in $\Phi$, nor do we impose a restriction on the upper Boyd index. The assumption on the lower Boyd index of the Banach function space $\Phi$ corresponds to our assumption on the boundedness of the Hardy operator on $\Phi$; compare with Remark \ref{rem.hardy} (a).
 
Note that $L^\infty_{c}$ is not dense in the Lorentz space $\Phi:=L^{p,\infty}$. If one denotes by $\Phi_0$ the closure of $L^\infty_{c}$ in $\Phi$, then one can show that $(X,Y)_{\Phi_0}$ is a proper subset of $(X,Y)_{\Phi}$.\\
\end{remark}

The proof of Theorem \ref{fcrepresent} follows essentially the lines of the proof of \cite[Theorem 6.2.1]{Hs06} (the case $\Re \alpha =1$). The new point of our approach is to replace the Hardy-Young inequality (used, for example, in \cite[Lemma 6.2.6]{Hs06}) by the boundedness of the Hardy operator $P$ on $\Phi$. For the convenience of the reader we sketch the proof of Theorem \ref{fcrepresent}.

\begin{proof}
We first show the inclusion ``$\supseteq$''. By \cite[Lemma 6.2.7]{Hs06}, there exist functions $f\in H^\infty_0(S_\phi)$ and $g\in \cE(S_\phi)$ such that 
\[
\int_0^1(f\psi)(sz) \, \frac{\ud s}{s} + g(z)\psi(z)z^{-1} = 1\quad (z\in S_\phi).
\]
Set 
\[
h_1(z) :=\int_0^1(f\psi)(sz) \, \frac{\ud s}{s} \quad \text{and} \quad h_2(z) := g(z)\psi(z)z^{-1} \quad  (z\in S_\phi). 
\]
Note that $h_1$, $h_2\in \cE(S_\phi)$ (\cite[Example 2.2.6]{Hs06}), and $x = h_1 (tA)x + h_2(tA)x$ for every $x\in X$ and every $t>0$. By algebraic properties of the $\cE$-primary functional calculus, see for example \cite[Theorem 2.3.3]{Hs06},
\[
h_2(tA)x\in \dom{A}\quad \textrm{and} \quad Ah_2(tA)x = t^{-1} g(tA)\psi(tA)x \quad (x\in X, \, t>0).
\] 
Thus, on the one hand,
\[
 K(t,x) \leq | h_1 (tA)x|_X + t \, | h_2 (tA)x|_{\dom{A}} \quad (x\in X, \, t>0 ) .
\]
On the other hand, by \cite[Proposition 2.6.11]{Hs06}, there exists a constant $C\geq 1$ such that 
\begin{align*}
|h_2(tA)x|_{X} & \leq C\, |x|_X , \text{ and} \\
|Ah_2(tA)x|_{X} & \leq C \, t^{-1}|\psi(tA)x|_X  \quad  (x\in X, \, t>0).
\end{align*}
Since $|(f\psi)(t A)x|_X \leq C |\psi(t A)x|_X$, see again \cite[Proposition 2.6.11]{Hs06}, one has
\begin{align*}
 t^{-1}|h_1(tA)x|_X & = \left|\frac{1}{t} \int_0^1 (f\psi)(stA)x \frac{\ud s}{s}\right|_X \\
 & \leq \frac{1}{t}\int_0^t | (f\psi)(sA)x |_X \frac{\ud s}{s}\\ 
 & \leq  C \, P\bigl( (\cdot)^{-1}|\psi(\cdot A)x|_X \bigr)(t) \quad \quad (x\in X, \, t>0).
\end{align*} 
Combining the preceding estimates, and since $K(t,x)\leq |x|_X$, we obtain 
\begin{equation} \label{eq.est2}
t^{-1} K(t,x)\leq C\, \bigl( P\bigl( (\cdot)^{-1}|\psi(\cdot A)x|_X \bigr)(t) + t^{-1}|\psi(tA)x|_X +  \min(1,t^{-1}) \, |x|_X \bigr) 
\end{equation}
for every $x\in X$ and every $t>0$. This estimate and Lemma \ref{lem.chi} yield the inclusion ``$\supseteq$'' and the corresponding estimate for the norm. 

To prove the converse inclusion ``$\subseteq$'', note that $\psi(tA)x=\psi(tA)a + t\gamma(tA)Ab$ for every $x=a+b$ with 
$a\in X$ and $b\in \dom{A}$, where $\gamma(z):=\psi(z)z^{-1}$ ($z\in S_\phi$). Hence, by \cite[Proposition 2.6.11]{Hs06}, $|\phi(tA)x|_X \leq C\bigl(|a|_X + t|b|_{\dom{A}}\bigr)$  ($t>0$). Since $\min(1,t^{-1})\, |x|_X \leq t^{-1}K(t,x)$ ($x\in X$) and $\min(1,t^{-1})\in \Phi$ (see Lemma \ref{lem.chi}), this proves the converse inclusion.

If, in addition, $A$ is invertible, then $|\cdot |_{\dom{A}} \simeq |A\cdot |_X$, and therefore the last term on the right-hand side of \eqref{eq.est2}, which comes from an estimate of $|h_2 (tA)x|_X$, can be dropped. In that case, we obtain $|x|_{(X,\dom{A})_\Phi} \leq C\, \left\|(\cdot)^{-1}|\psi(\cdot A)x|_X\right\|_{\Phi}$. The other inequality is proved as above. 
\end{proof}

\begin{remark} \label{holomorphic}
One can easily check that the function $\psi(z):= z e^{-z}$ satisfies the assumptions of Theorem \ref{fcrepresent} for every $\phi \in (0, \pi/2)$. Hence, if $-A$ is the generator of a bounded holomorphic $C_0$-semigroup on $X$, then we get the representation
 \begin{equation}\label{holrep}
  {(X,\dom{A})_{\Phi}} = \bigl\{ x\in X: \left[(0,\infty)\ni t \mapsto |Ae^{-tA}x|_X\, \right] \in \Phi \bigr\}
 \end{equation}
and 
\[
|x|_{(X,\dom{A})_\Phi} \simeq |x|_X + \bigl\| Ae^{-\cdot A}x\bigr\|_{\Phi(X)}\quad \quad \quad \bigl(x\in (X,\dom{A})_{\Phi}\bigr) 
\]
for any $\Phi \in \mathfrak{L}_P$. For the classical real interpolation spaces $(X, \dom{A})_{\theta, q}$ ($\theta\in (0,1)$, $q\in [1, \infty]$), this representation is well-known; see Komatsu \cite{Ko67} or Butzer \& Berens \cite{BuBe67}. We point also out that 
if an operator $A$ has $L^p$-maximal regularity, then $-A$ generates a holomorphic $C_0$-semigroup.
\end{remark}

Our next result shows that under additional assumptions on the Banach function space $\Phi$ one can get the conclusion of Theorem \ref{fcrepresent} for all functions $\psi$ satisfying merely the condition (i) of Theorem \ref{fcrepresent}; cf. \cite[Theorem 6.2.9]{Hs06}. Recall that $\mathfrak{L}_{P+Q}$ stands for the class of Banach function space $\Phi$ with $\min(1,\frac{1}{(\cdot)})\in \Phi$ and such that the Calder\'on operator $P+Q$ is bounded on $\Phi$.

\begin{theorem}\label{fcrepresent2} 
Let $A$ be sectorial and $\phi\in (\phi_A, \pi)$. Let $\psi\neq 0$ be any holomorphic function on $S_\phi$ such that $\psi$, $(\cdot)^{-1}\psi \in \cE(S_\phi)$. 
Then, for any Banach function space $\Phi \in \mathfrak{L}_{P+Q}$ we have 
\[ 
(X,\dom{A})_{\Phi} = \left\{ x\in X: \left[(0,\infty)\ni t \mapsto |t^{-1}\psi(tA)x|_X\, \right] \in \Phi \right\}
\] 
and  
\[
|x|_{(X,\dom{A})_{\Phi}} \simeq |x|_X + 
\left\|(\cdot)^{-1}|\psi(\cdot A)x|_X\right\|_{\Phi}\quad \bigl(x\in (X,\dom{A})_{\Phi}\bigr).
\]
If, in addition, $A$ is invertible, then 
\[
|x|_{(X,D_A)_{\Phi}} \simeq \left\|(\cdot)^{-1}|\psi(\cdot A)x|_X\right\|_{\Phi}\quad (x\in (X,\dom{A})_{\Phi}).
\]
\end{theorem}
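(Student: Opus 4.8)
The plan is to deduce Theorem~\ref{fcrepresent2} from Theorem~\ref{fcrepresent} by comparing a general admissible $\psi$ with a fixed non-degenerate model function, the comparison being carried out by a Calder\'on reproducing formula. Since $0\le P\le P+Q$ on nonnegative functions one has $\mathfrak{L}_{P+Q}\subseteq\mathfrak{L}_P$, and $\min(1,(\cdot)^{-1})\in\Phi$ by the definition of $\mathfrak{L}_{P+Q}$. I would fix $\psi_1(z):=z(1+z)^{-3}$, which satisfies conditions (i) and (ii) of Theorem~\ref{fcrepresent}, so that Theorem~\ref{fcrepresent} already gives
\[
(X,\dom{A})_\Phi=\bigl\{x\in X:\ [(0,\infty)\ni t\mapsto t^{-1}|\psi_1(tA)x|_X]\in\Phi\bigr\}=:S_{\psi_1}
\]
with $|x|_{(X,\dom{A})_\Phi}\simeq|x|_X+\bigl\|(\cdot)^{-1}|\psi_1(\cdot A)x|_X\bigr\|_\Phi$, and $\simeq\bigl\|(\cdot)^{-1}|\psi_1(\cdot A)x|_X\bigr\|_\Phi$ when $A$ is invertible. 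Denoting by $S_\psi$ the space on the right-hand side of the assertion, it then remains to prove $S_\psi=S_{\psi_1}$ with equivalent norms.

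The inclusion $S_{\psi_1}\subseteq S_\psi$, together with $\bigl\|(\cdot)^{-1}|\psi(\cdot A)x|_X\bigr\|_\Phi\le C\,|x|_{(X,\dom{A})_\Phi}$, I would obtain exactly as the converse inclusion in the proof of Theorem~\ref{fcrepresent}: for $x=a+b$ with $a\in X$, $b\in\dom{A}$ one has $\psi(tA)x=\psi(tA)a+tA\gamma(tA)b$ with $\gamma:=(\cdot)^{-1}\psi\in\cE(S_\phi)$, and since $\sup_{t>0}\|(1+tA)^{-1}\|_{\cL(X)}<\infty$ the operators $g(tA)$ ($g\in\cE(S_\phi)$, $t>0$) are bounded uniformly in $t$; hence $|\psi(tA)x|_X\le C\,K(t,x)$, so that $t^{-1}|\psi(tA)x|_X\le C\,t^{-1}K(t,x)\in\Phi$. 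This step uses only $\psi,(\cdot)^{-1}\psi\in\cE(S_\phi)$, not condition (ii).

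The inclusion $S_\psi\subseteq S_{\psi_1}$ is the real point, and it will follow from the pointwise estimate
\[
r^{-1}|\psi_1(rA)x|_X\le C\,(P+Q)\bigl(s\mapsto s^{-1}|\psi(sA)x|_X\bigr)(r)\qquad(r>0),
\]
because then $r^{-1}|\psi_1(rA)x|_X\in\Phi$ with the right norm bound whenever $s^{-1}|\psi(sA)x|_X\in\Phi$, by boundedness of $P+Q$ on $\Phi$. To establish it I would use that $\psi\ne 0$, so that $\varphi(z):=\overline{\psi(\bar z)}\,(1+z)^{-N}$ (with $N\ge 2$ fixed) satisfies $\psi\varphi\in H^\infty_0(S_\phi)$ and $c:=\int_0^\infty(\psi\varphi)(u)\,\frac{\ud u}{u}=\int_0^\infty|\psi(u)|^2(1+u)^{-N}\,\frac{\ud u}{u}\in(0,\infty)$; hence $x=c^{-1}\int_0^\infty(\psi\varphi)(sA)x\,\frac{\ud s}{s}$. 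Applying $\psi_1(rA)$, using multiplicativity of the functional calculus to write $\psi_1(rA)(\psi\varphi)(sA)=[\psi_1(r\,\cdot)\varphi(s\,\cdot)](A)\,\psi(sA)$, and estimating the bounded factor by the Cauchy integral bound $\|h(A)\|_{\cL(X)}\le C\int_{\partial S_{\phi'}}|h(z)|\,\frac{|\ud z|}{|z|}$ ($h\in H^\infty_0(S_\phi)$, $\phi_A<\phi'<\phi$) together with the uniform pointwise bounds $|\psi(w)|\le C\min(1,|w|)$ (from $\psi,(\cdot)^{-1}\psi\in H^\infty(S_\phi)$), $|\psi_1(w)|\le C\min(|w|,|w|^{-2})$ and $|\varphi(w)|\le C\min(|w|,|w|^{-N})$ on $S_\phi$, one arrives at
\[
|\psi_1(rA)x|_X\le C\int_0^\infty\kappa(r/s)\,|\psi(sA)x|_X\,\frac{\ud s}{s},\qquad
\kappa(\mu):=\int_0^\infty\min\!\bigl(\mu v,(\mu v)^{-2}\bigr)\min\!\bigl(v,v^{-N}\bigr)\,\frac{\ud v}{v}.
\]
A short computation (splitting at $v=1$ and $v=1/\mu$) gives $\kappa(\mu)\le C_N\min(\mu,\mu^{-1})$; it is the decay exponent $-2$ of $\psi_1$ at infinity that prevents a logarithmic loss here. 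Writing $|\psi(sA)x|_X=sG(s)$ with $G(s):=s^{-1}|\psi(sA)x|_X$ and substituting $\mu=r/s$ turns the right-hand side into $C\int_0^\infty\frac{\kappa(\mu)}{\mu}\,G(r/\mu)\,\frac{\ud\mu}{\mu}$ with $\frac{\kappa(\mu)}{\mu}\le C_N\min(1,\mu^{-1})$; splitting this at $\mu=1$ and substituting back identifies the two halves precisely with $QG(r)$ and $PG(r)$, which is the desired estimate.

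Two loose ends remain. For a general sectorial $A$ (not assumed one-to-one, nor with $\dom{A}$ or $\rg{A}$ dense) the reproducing formula is available on $\overline{\dom{A}\cap\rg{A}}$, and the general case reduces to this one as in \cite[Chapter~6]{Hs06}, the remaining parts of $x$ contributing only terms dominated by $\min(1,(\cdot)^{-1})|x|_X\in\Phi$. The invertible case follows by dropping the $|x|_X$ terms everywhere, using $|\cdot|_{\dom{A}}\simeq|A\cdot|_X$ as in the proof of Theorem~\ref{fcrepresent}. I expect the kernel estimate in the previous paragraph to be the only genuine obstacle: unlike in the classical power-weighted $L^p$ setting, where one merely checks integrability of the relevant convolution kernel against $\frac{\ud r}{r}$ after inserting the weight, here there is no such room, so the kernel must be arranged to be dominated by $\min(\mu,\mu^{-1})$ itself — whence the choice of a model function $\psi_1$ with sufficiently fast decay at infinity and the careful accounting of the vanishing of $\psi$ at $0$ against its possible non-vanishing at $\infty$.
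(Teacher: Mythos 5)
Your overall strategy --- prove the statement for one fixed non-degenerate model function $\psi_1(z)=z(1+z)^{-3}$ via Theorem \ref{fcrepresent}, and transfer it to an arbitrary admissible $\psi$ through the pointwise bound $r^{-1}|\psi_1(rA)x|_X\le C\,(P+Q)\bigl((\cdot)^{-1}|\psi(\cdot A)x|_X\bigr)(r)$ --- is viable, and your kernel computation (the bound $\kappa(\mu)\le C\min(\mu,\mu^{-1})$ and its identification with $P$ and $Q$ after substitution) is correct, as is the easy inclusion. This is a genuinely different route from the paper's, which follows Haase's proof of \cite[Theorem 6.2.9]{Hs06}: there one chooses $f$ with $\int_0^\infty(f\psi)(s)\,\frac{\ud s}{s}=1$, splits the constant function $1=h_1+h_2$ with $h_1(z)=\int_0^1(f\psi)(sz)\,\frac{\ud s}{s}$ and $h_2(z)=\int_1^\infty(f\psi)(sz)\,\frac{\ud s}{s}$, and estimates the $K$-functional directly by $t^{-1}K(t,x)\le C\bigl((P+Q)((\cdot)^{-1}|\psi(\cdot A)x|_X)(t)+\min(1,t^{-1})|x|_X\bigr)$, with no appeal to Theorem \ref{fcrepresent}; your version buys a reusable comparison principle between different $\psi$'s, the paper's buys uniformity over all $x\in X$ for free.

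The genuine gap is in your ``two loose ends'' paragraph. You invoke the Calder\'on reproducing formula $x=c^{-1}\int_0^\infty(\psi\varphi)(sA)x\,\frac{\ud s}{s}$ and propose to treat general $x$ by splitting off ``the remaining parts of $x$''. For a sectorial operator that is neither injective nor densely defined --- exactly the generality the paper insists on, see Remark \ref{rem.kk} --- no such splitting exists: outside the reflexive setting one does not have $X=\ker A\oplus\overline{\rg{A}}$, and $\overline{\dom{A}}$ may be a proper subspace, so ``$x$ minus its part in $\overline{\dom{A}\cap\rg{A}}$'' is undefined and your argument, as written, proves the hard inclusion only for $x\in\overline{\dom{A}}\cap\overline{\rg{A}}$. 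The repair stays entirely inside your scheme: do not apply the reproducing formula to $x$ but to $y:=\psi_1(rA)x$, which lies in $\dom{A}\cap\rg{A}$ for \emph{every} $x\in X$ because $\psi_1$ vanishes at $0$ and at $\infty$ (indeed $\psi_1(rA)x=A\bigl(r(1+rA)^{-3}x\bigr)=(1+rA)^{-1}\bigl(rA(1+rA)^{-2}x\bigr)$). Since the formula converges on $\overline{\dom{A}}\cap\overline{\rg{A}}$, one obtains $c\,\psi_1(rA)x=\int_0^\infty[\psi_1(r\cdot)\varphi(s\cdot)](A)\,\psi(sA)x\,\frac{\ud s}{s}$ for all $x\in X$, and your kernel estimate then gives the pointwise bound in full generality, with no correction term needed in this direction. (This is precisely the difficulty the paper's partition $h_1+h_2\equiv 1$ avoids by construction, since $h_1(tA)+h_2(tA)=I$ on all of $X$.) With this modification your proof is complete; the invertible case goes through as you indicate.
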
 

The proof  follows the lines of the proof of \cite[Theorem 6.2.9]{Hs06}. We provide only the main supplementary observations which should be made.

\begin{proof}  
As in the proof of Theorem \ref{fcrepresent}, we first prove the inclusion ``$\supseteq$''. By \cite[Lemma 6.2.5]{Hs06}, there exists a function $f\in H^\infty(S_\phi)$ such that 
$\tilde f:=(\cdot) f\in H^\infty_0(S_\phi)$ and $\int_0^\infty (f\psi)(s) \frac{\ud s}{s} =1$. Define the functions $h_1$ and $h_2$ as follows: 
\[
h_1(z):=\int_0^1(f\psi)(sz) \frac{\ud s}{s} \quad \textrm{and}\quad h_2(z):=\int_1^\infty(f\psi)(sz) \frac{\ud s}{s}\quad (z\in S_\phi).
\]
Then, $h_1$, $h_2\in \cE(S_\phi)$  (see \cite[Example 2.2.6]{Hs06}), and $h=(\cdot)h_2\in H_0^\infty(S_\phi)$ with $h(A)=\int_1^\infty s^{-1}(\tilde f \psi)(sA) \frac{\ud s}{s}$; see \cite[Lemma 6.2.10]{Hs06}.
Therefore, for every $x\in X$ and $t>0$ we have $x= h_1(tA)x + h_2(tA)x$ with $h_2(tA)x\in \dom{A}$. Thus, 
\[
K(t, x)\leq |h_1(tA)x|_X + t|h_2(tA)x|_{\dom{A}} .
\] 
By \cite[Proposition 2.6.11]{Hs06}, there exists a constant $C\geq 1$ such that 
\begin{align*}
 |h_2(tA)x|_{X} &\leq C\, |x|_X, \\
 |(\tilde f \psi) (tA)x|_X & \leq C\, |\psi (tA)x|_X, \text{ and} \\ 
 |(f \psi) (tA)x|_X &\leq C\, |\psi (tA)x|_X \quad (x\in X,\, t>0).
\end{align*}
Therefore, 
\begin{align*}
 t^{-1}|h_1(tA)x|_X & \leq \frac{1}{t}\int_0^t | (f\psi)(sA)x |_X \frac{ds}{s} \\
 & \leq C \, \frac{1}{t}\int_0^t | \psi(sA)x |_X \frac{ds}{s}\\
 &\leq C \, P \bigl((\cdot)^{-1} |\psi(\cdot A) x|_X \bigr) (t) 
\intertext{and}
 |Ah_2(tA)x|_{X} 
 & = \left|\int_t^\infty s^{-1} (\tilde f \psi)(s tA) \frac{\ud s}{s}\right|_X \\
 & \leq C \, \int_t^\infty s^{-1} |\psi(tA) x|_X \frac{\ud s}{s} \\
 & \leq C \, Q \bigl((\cdot)^{-1} |\psi(\cdot A) x|_X \bigr)(t) .
\end{align*}
Combining the preceding estimates, and since $K(t,x)\leq |x|_X$, we obtain
\begin{equation} \label{eq.est1}
 t^{-1}K(t,x) \leq C \, \bigl( (P+Q) ((\cdot)^{-1} |\psi(\cdot A) x|_X \bigr) (t) + \min(1, t^{-1})|x|_{X} \bigr)
\end{equation}
for every $x\in X$ and $t>0$. This estimate gives the desired claim. A similar argument as in the proof of Theorem \ref{fcrepresent} yields the converse inclusion.

If, in addition, $A$ is invertible, then $|\cdot |_{\dom{A}} \simeq |A\cdot |_X$, and therefore the last term on the right-hand side of \eqref{eq.est1}, which comes from an estimate of $|h_2 (tA)x|_X$, can be dropped. In that case, we obtain $|x|_{(X,\dom{A})_{\Phi}} \leq C\, \left\|(\cdot)^{-1}|\psi(\cdot A)x|_X\right\|_{\Phi}$. The other inequality is proved as above. 
\end{proof}

A limit case occurs in the statement of Theorem \ref{fcrepresent} when $\phi_A = \frac{\pi}{2}$, that is, $A$ is sectorial for all angles $\phi\in (\frac{\pi}{2},\pi)$, but $\psi$ is only bounded on the sector $S_{\frac{\pi}{2}}$. This situation occurs for example when $-A$ is the generator of a bounded $C_0$-semigroup $T$ and $\psi (z) = e^{-z} -1$. The connection between interpolation theory and semigroups was discovered by J.~L.~Lions; see \cite[Section 1.13]{Tr95} for references.  For example, the real interpolation spaces  $(X,\dom{A})_{\theta,q}$ admit the following semigroup characterisation \cite{Tr92}: \emph{if  $\theta\in (0,1)$, $q\in [1,\infty )$, then 
\[
(X, \dom{A})_{\theta,q} = \left\{x\in X: \int_0^\infty t^{-\theta  q} |(T(t)-I)x|_X^q \frac{\ud t}{t}  <\infty \right\}.
\]}
A simple modification of the proof allows us to give the following extension of this result. 

\begin{theorem}\label{semigroup}
 Let $-A$ be the generator of a bounded $C_0$-semigroup $T$ on a Banach space $X$. Then for every Banach function space $\Phi \in \mathfrak{L}_P$,
 \begin{equation}\label{semig.rep}
  (X, \dom{A})_{\Phi} = \left\{x\in X:  \bigl[(0,\infty)\ni t \mapsto t^{-1}|(T(t)-I) x|_X\right] \in \Phi \bigr\}
 \end{equation}
and
\[ 
|x|_{(X, \dom{A})_{\Phi}}\simeq |x|_X + \bigl\| (\cdot)^{-1}|(T(\cdot)-I) x|_X \bigr\|_{\Phi} \quad 
 \bigl(x\in (X, \dom{A}\bigr)_{\Phi}).
\]
\end{theorem}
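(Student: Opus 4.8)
The plan is to mimic the proof of Theorem \ref{fcrepresent}, with $\psi(z) := e^{-z}-1$ playing the role of the auxiliary function, and to handle the limit-case phenomenon ($\phi_A$ possibly equal to $\frac{\pi}{2}$, and $A$ not necessarily invertible or injective) by working directly with the semigroup rather than with contour integrals. The starting observation is that $(T(t)-I)x = \psi(tA)x$ in the sense of the extended functional calculus whenever $-A$ generates a bounded $C_0$-semigroup; note that here $A$ is sectorial of every angle $\phi\in(\frac{\pi}{2},\pi)$ since $T$ is bounded, and $\psi$ is bounded and holomorphic on $S_{\pi/2}$ but only there. One would want to invoke Theorem \ref{fcrepresent}, but condition (i) would require $\psi,(\cdot)^{-1}\psi\in\cE(S_\phi)$ for some $\phi\in(\phi_A,\pi)$, which fails because $\psi$ is not holomorphic past the right half-plane; this is exactly why a separate statement is needed.

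For the inclusion ``$\supseteq$'' I would reproduce the splitting $x = h_1(tA)x + h_2(tA)x$ used in the proof of Theorem \ref{fcrepresent}, but with the decomposition realised via the semigroup. The natural choice is
\[
h_1(tA)x := \frac{1}{t}\int_0^t (T(s)-I)x\,\ud s \cdot t^{-1}\cdot t \quad\text{(suitably normalised)},
\]
or more cleanly: set $b := \frac{1}{t}\int_0^t T(s)x\,\ud s$, which lies in $\dom{A}$ with $Ab = t^{-1}(I - T(t))x = -t^{-1}\psi(tA)x$, and $a := x - b = -\frac{1}{t}\int_0^t (T(s)-I)x\,\ud s$. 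Boundedness of $T$ gives $|b|_X\le C|x|_X$ and $|Ab|_X\le t^{-1}|\psi(tA)x|_X$, while $|a|_X \le \frac{1}{t}\int_0^t |(T(s)-I)x|_X\,\ud s$. Hence
\[
t^{-1}K(t,x) \le t^{-1}|a|_X + |Ab|_X \le P\bigl((\cdot)^{-1}|\psi(\cdot A)x|_X\bigr)(t) + t^{-1}|\psi(tA)x|_X,
\]
and combining with $t^{-1}K(t,x)\le t^{-1}|x|_X$ and with $K(t,x)\le C|x|_X$ one gets an estimate of the shape of \eqref{eq.est2}: $t^{-1}K(t,x)\le C\,(P(g)(t) + g(t) + \min(1,t^{-1})|x|_X)$ where $g(t):=t^{-1}|(T(t)-I)x|_X$. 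Since $\Phi\in\mathfrak{L}_P$ and (using that $\chi_{(0,1)}\in\Phi$, equivalently $\min(1,(\cdot)^{-1})\in\Phi$, which is automatic once $[X,Y]_\Phi\neq\{0\}$ or may be assumed trivially) one concludes $g\in\Phi\Rightarrow (\cdot)^{-1}K(\cdot,x)\in\Phi$, i.e.\ $x\in(X,\dom{A})_\Phi$, with the norm estimate.

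For the converse inclusion ``$\subseteq$'' I would argue as in Theorem \ref{fcrepresent}: write $x = a+b$ with $a\in X$, $b\in\dom{A}$, and observe $\psi(tA)x = (T(t)-I)a + (T(t)-I)b$. The first term is bounded in $X$-norm by $C|a|_X$ by boundedness of $T$; for the second, $(T(t)-I)b = \int_0^t -AT(s)b\,\ud s = -\int_0^t T(s)Ab\,\ud s$, so $|(T(t)-I)b|_X \le Ct|Ab|_X \le Ct|b|_{\dom{A}}$. Thus $t^{-1}|\psi(tA)x|_X \le C\,t^{-1}(|a|_X + t|b|_{\dom{A}})$, and taking the infimum over decompositions gives $t^{-1}|(T(t)-I)x|_X \le C\,t^{-1}K(t,x)$, whence $(\cdot)^{-1}|(T(\cdot)-I)x|_X \in \Phi$ when $x\in(X,\dom{A})_\Phi$, with the matching estimate; combined with $\min(1,t^{-1})|x|_X\le t^{-1}K(t,x)$ and $\min(1,(\cdot)^{-1})\in\Phi$ this yields the full norm equivalence. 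The main obstacle I anticipate is purely bookkeeping around the boundedness constant of $T$ and around the role of $\min(1,(\cdot)^{-1})\in\Phi$: strictly speaking the hypothesis ``$\Phi\in\mathfrak{L}_P$'' should be read together with the remark that $\chi_{(0,1)}\in\Phi$ is equivalent to $\min(1,(\cdot)^{-1})\in\Phi$ for such $\Phi$, and if $\chi_{(0,1)}\notin\Phi$ both sides of \eqref{semig.rep} are $\{0\}$, so the identity holds trivially; everything else is a faithful transcription of the Hardy-operator argument, with the semigroup providing the explicit decomposition in place of the contour-integral functions $h_1,h_2$.
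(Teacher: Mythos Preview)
Your proposal is correct and follows essentially the same route as the paper: the decomposition $x=a+b$ with $b=\frac{1}{t}\int_0^t T(s)x\,\ud s$ and $a=x-b$ is exactly the paper's quasi-linearizing pair $V_0(t)x=-a$, $V_1(t)x=b$ (cited from Triebel), and both directions of the estimate proceed identically. One small slip: your displayed bound $t^{-1}K(t,x)\le t^{-1}|a|_X+|Ab|_X$ omits the $|b|_X$ term coming from the graph norm $|b|_{\dom{A}}=|b|_X+|Ab|_X$; you clearly know this (you note $|b|_X\le C|x|_X$ just before), and it is precisely this term that, combined with $K(t,x)\le |x|_X$, produces the $\min(1,t^{-1})|x|_X$ contribution---just make that step explicit.
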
 

The proof follows the lines of \cite[Theorem 1.13.2]{Tr95}. For the convenience of the reader we sketch its pattern. 

\begin{proof}
An analysis of the proof of \cite[Theorem 1.13.2]{Tr95} shows 
that the couple $(X, \dom{A})$ is {\em quasi-linearizable} with respect to the operator-valued functions $V_0$ and $V_1$, where $V_0(t)x:= \frac{1}{t} \int_0^t(T(s)x-x)\,\ud s$ and $V_1(t)x:= x-V_0(t)x$ ($x\in X$, $t>0$). Therefore, there exists a constant $C$ such that
\[
K(t,x)\leq |V_0(t)x|_X + t|V_1(t)x|_{\dom{A}} \leq C K(t,x)\quad \quad (x\in X, \, t>0).
\]
Fix $t>0$ and $x\in X$. Note that $tAV_1(t) x = A\int_0^t T(s) \ud s = T(t)x-x$. Thus, 
\[
t|V_1(t)x|_{\dom{A}}\leq \left| T(t)x -x \right|_X + \sup_{s>0}|T(s)|_{\cL(X)}\, t |x|_X.
\] 
Moreover, note that $|V_0(t)x|_X\leq t P\left((\cdot)^{-1}| T(\cdot)x -x |_X\right)(t)$, and recall that $K(t, x)\leq |x|_X$. Consequently, we get 
\[
K(t,x)\leq C \, \bigl(t P\left((\cdot)^{-1}| T(\cdot)x -x |_X\right)(t) +\min(1,t) |x|_X + | T(t)x -x |_X\bigr),
\] 
where $C$ is a constant independent of $x$ and $t$. Since also $| T(t)x -x |_X\leq C \, K(t,x)$, by applying Lemma \ref{lem.chi}, the proof is complete.
\end{proof}

Our last result in this section extends a theorem of Dore on the boundedness of the $H^\infty$-functional calculus for sectorial operators. Dore proved in \cite{Do99} that every invertible sectorial operator $A$ on a Banach space $X$ has a bounded $H^\infty$-functional calculus in each real interpolation space $(X, \dom{A})_{\theta, q}$ ($\theta \in (0,1)$, $q\in [1,\infty]$). Dore's proof is based on the reiteration theorem. Subsequently, Haase provided an alternative approach which does not rely on the reiteration theorem and gives also an extension of Dore's result to the class of injective sectorial operators and the spaces $(X, \dom{A^\alpha}\cap \rg{A^\alpha})_{\theta, q}$; see \cite[Theorem 6.5.3 and Corollary 6.5.8]{Hs06}. 

In \cite{KaKu10}, Kalton and Kucherenko further extended Haase's result by considering general real interpolation spaces  $(X, \dom{A}\cap \rg{A})_{\Phi}$ and by proving that the part of $A$ in these spaces has absolute functional calculus, a property which is even stronger than a bounded $H^\infty$-functional calculus. In this respect, our last result in this section is weaker. For a comparison of the setting here and in Kalton \& Kucherenko, see Remark \ref{rem.kk}.   

In the case of invertible sectorial operators we extract a simple alternative proof of Dore's theorem, which extends to our general setting.\\

Let $A$ be an injective, sectorial operator, and let $\Phi\in \mathfrak{L}_P$. Put $e(z) := \frac{z}{(1+z)^2}$ ($z\in\C$), so that $fe\in H^\infty_0 (S_\phi )$ for every $f\in H^\infty (S_\phi )$. For every $f\in H^\infty (S_\phi )$ ($\phi \in (\phi_A ,\pi )$) one then defines
\[
 f(A) := A^{-1} (I+A)^2 (fe) (A) 
\]
with maximal domain, so that $f(A)$ is a closed operator. We say that $A$ has a bounded $H^\infty(S_\phi)$-functional calculus if there exists a constant $C\geq 0$ such that 
\[
|f(A)x|_{X}\leq C\, \| f\|_{H^\infty} \, |x|_X \quad (f\in H^\infty(S_\phi),\, x\in \dom{f(A)} ).
\]
Note that, by Theorem \ref{fcrepresent}, since the resolvent of $A$ commutes with $\psi(tA)$, the part of a sectorial operator $A$ in $(X, \dom{A})_{\Phi}$ is again a sectorial operator. 

\begin{theorem}[Dore type theorem] \label{general Dore th}
Let $A$ be an invertible, sectorial operator on a Banach space $X$. Then, for every Banach function space $\Phi\in \mathfrak{L}_{P+Q}$ and every $\phi\in (\phi_A, \pi)$, the part of $A$ in $(X,\dom{A})_{\Phi}$ has a bounded $H^\infty (S_\phi)$-functional calculus.
\end{theorem}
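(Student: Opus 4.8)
The plan is to reduce the statement to a uniform square-function bound for the part $\tilde A$ of $A$ in $Y:=(X,\dom{A})_{\Phi}$ and then to invoke the Convergence Lemma for the holomorphic functional calculus. As observed after the statement, $\tilde A$ is again sectorial, and since $A$ is invertible one checks immediately that $A^{-1}$ maps $Y$ into $Y$ (it commutes with every $\psi(tA)$), so $\tilde A$ is an invertible --- in particular injective --- sectorial operator on $Y$. Fixing $\psi(z):=z(1+z)^{-2}$, which satisfies $\psi,(\cdot)^{-1}\psi\in\cE(S_\phi)$, Theorem \ref{fcrepresent2} (applicable because $\Phi\in\mathfrak{L}_{P+Q}$) together with the invertibility of $A$ gives the representation
\[
|x|_{Y}\simeq\|g_x\|_{\Phi},\qquad g_x(t):=t^{-1}\,|\psi(tA)x|_X\quad(x\in Y).
\]
Hence it suffices to produce a constant $C=C(A,\phi,\Phi)$ with $\|g_{f(A)x}\|_{\Phi}\le C\,\|f\|_{H^\infty}\,\|g_x\|_{\Phi}$ for all $f\in H^\infty_0(S_\phi)$ and $x\in Y$: this says that each $f(A)$, $f\in H^\infty_0(S_\phi)$, restricts to an operator on $Y$ with $|f(A)|_{\cL(Y)}\le C'\|f\|_{H^\infty}$, and since $f(\tilde A)=f(A)|_Y$ for such $f$ and $\tilde A$ is injective, the Convergence Lemma \cite[Proposition 5.1.7]{Hs06} upgrades this uniform $H^\infty_0$-bound to a bounded $H^\infty(S_\phi)$-functional calculus for $\tilde A$, which is exactly the assertion.

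For the key estimate, fix $\phi'\in(\phi_A,\phi)$, put $M:=\sup_{\lambda\notin S_{\phi'}}|\lambda R(\lambda,A)|_{\cL(X)}$, and choose an auxiliary function $\varphi\in H^\infty_0(S_\phi)$ with $\varphi\psi\in H^\infty_0(S_\phi)$, $\int_0^\infty(\varphi\psi)(s)\,\tfrac{ds}{s}=1$, and of sufficiently high decay order (e.g.\ $\varphi:=c\,\psi^2$, so that $|\varphi(z)|\le C\min(|z|^{2},|z|^{-2})$ on $S_\phi$). Writing $\psi_t(z):=\psi(tz)$ and $\varphi_s(z):=\varphi(sz)$, the normalisation $\int_0^\infty(\varphi\psi)(sz)\,\tfrac{ds}{s}=1$ ($z\in S_\phi$) combined with the Fubini theorem for the holomorphic functional calculus (both standard, see \cite{Hs06}) yields, for $f\in H^\infty_0(S_\phi)$, $x\in X$ and $t>0$,
\[
\psi(tA)f(A)x=\int_0^\infty(\psi_t f\varphi_s)(A)\,\psi(sA)x\;\frac{ds}{s},
\]
the integral being absolutely convergent in $X$. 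The elementary contour estimate for $H^\infty_0$-functions gives $|(\psi_t f\varphi_s)(A)|_{\cL(X)}\le\frac{M}{2\pi}\|f\|_{H^\infty}\,\Theta(s/t)$, where, after the substitution $w=tz$, $\Theta(r):=\int_{\partial S_{\phi'}}|\psi(w)|\,|\varphi(rw)|\,\tfrac{|dw|}{|w|}$. Consequently
\[
g_{f(A)x}(t)\le\frac{M}{2\pi}\,\|f\|_{H^\infty}\int_0^\infty\Theta(\sigma)\,g_x(t\sigma)\,d\sigma=:\frac{M}{2\pi}\,\|f\|_{H^\infty}\,(Sg_x)(t)\qquad(t>0).
\]

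It remains to bound the kernel operator $S$ on $\Phi$. Since $\psi\in H^\infty_0$ and $\varphi$ has decay order at least two, splitting the contour integral for $\Theta(r)$ at $|w|=1$ and $|w|=r^{-1}$ and estimating directly gives $\Theta(r)\le C\min(r,r^{-1})$, with \emph{no} logarithmic factor. Splitting $S=\int_0^1+\int_1^\infty$ and changing variables $r=t\sigma$, the range $\sigma<1$ contributes at most $C\,t^{-2}\int_0^t r\,h(r)\,dr\le C\,t^{-1}\int_0^t h=C\,(Ph)(t)$, while the range $\sigma>1$ contributes at most $C\int_t^\infty h(r)\,\tfrac{dr}{r}=C\,(Qh)(t)$; thus $Sh\le C\,(P+Q)h$ pointwise for every $h\ge0$. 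Since $\Phi\in\mathfrak{L}_{P+Q}$, the Calder\'on operator $P+Q$ is bounded on $\Phi$, and the order ideal property of $\Phi$ yields
\[
\|g_{f(A)x}\|_{\Phi}\le C\|f\|_{H^\infty}\,\|(P+Q)g_x\|_{\Phi}\le C'\|f\|_{H^\infty}\,\|g_x\|_{\Phi}\simeq C''\|f\|_{H^\infty}\,|x|_{Y},
\]
which is the required bound (and also shows $f(A)x\in Y$); the proof is then completed by the reduction of the first paragraph.

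The main obstacle is the logarithm-free decay bound $\Theta(r)\lesssim\min(r,r^{-1})$. A naive choice such as $\varphi=c\psi$ only gives $\Theta(r)\lesssim\min(r\log\tfrac1r,\,r^{-1}\log r)$, and the factor $\log r$ is \emph{not} absorbed by $P+Q$ on a general $\Phi\in\mathfrak{L}_{P+Q}$, so one is forced to use an auxiliary function of strictly higher decay order. This is also the structural reason why the hypothesis is $\Phi\in\mathfrak{L}_{P+Q}$ and not merely $\Phi\in\mathfrak{L}_P$: the operator $Q$ enters precisely through the tail $\sigma>1$ (equivalently $s>t$) of the reproducing integral. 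The remaining points --- absolute convergence of the Bochner integral and the identity $\psi(tA)f(A)=(\psi_t f)(A)$, the identification $f(\tilde A)=f(A)|_Y$ for $f\in H^\infty_0(S_\phi)$, and the passage from $H^\infty_0$ to $H^\infty$ for the possibly non-densely-defined operator $\tilde A$ --- are routine within Haase's framework \cite{Hs06}, using the injectivity (indeed invertibility) of $\tilde A$.
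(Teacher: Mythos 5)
Your route is genuinely different from the paper's, and its analytic core is sound. The paper's proof is a two-line trick: with $\psi(z)=z(1+z)^{-1}$ and $\gamma(z)=z^{\alpha}(1+z)^{-1}$, $\alpha\in(0,1)$, it uses the commutation $(\gamma\psi)(tA)f(A)x=(\gamma_t f)(A)\psi(tA)x$ together with the scale-invariant contour bound $|(\gamma_t f)(A)|_{\cL(X)}\le C_{\alpha,A}\|f\|_{\infty}$ (uniform in $t$ because the measure $|\ud z|/|z|$ is dilation invariant), and then applies Theorem \ref{fcrepresent2} with \emph{two different} describing functions, $\gamma\psi$ for the image and $\psi$ for the argument; the Calder\'on operator enters only through Theorem \ref{fcrepresent2}, which is needed because $\gamma\psi$ vanishes to order $1+\alpha$ at the origin. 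No integral over the auxiliary parameter, no reproducing formula, and --- crucially --- the estimate holds for \emph{all} $f\in H^{\infty}(S_\phi)$ at once, since $\gamma_t f\in H^{\infty}_0(S_\phi)$ automatically. Your proof instead keeps a single describing function and pays with the Calder\'on reproducing formula and a kernel estimate: the domination $Sh\le C(P+Q)h$, the log-free bound $\Theta(r)\lesssim\min(r,r^{-1})$ obtained by taking the auxiliary function of decay order two, and the resulting $\|g_{f(A)x}\|_\Phi\lesssim\|f\|_\infty\|g_x\|_\Phi$ are all correct, and they locate the role of $Q$ (the tail $s>t$) very transparently.

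The genuine gap is the final reduction. You prove the uniform bound only for $f\in H^{\infty}_0(S_\phi)$ and then invoke the Convergence Lemma to pass to $H^{\infty}(S_\phi)$, calling this routine. In the present generality it is not: the part $A_\Phi$ need not be densely defined in $Y=(X,\dom{A})_\Phi$ (see Remark \ref{CKvsKK}), elements of $\dom{f(A_\Phi)}$ need not lie in $\overline{\dom{A_\Phi}}$, and $\Phi$ is not assumed to have the Fatou property, so one can neither obtain strong convergence $f_n(A_\Phi)x\to f(A_\Phi)x$ in $Y$ for all relevant $x$, nor pass to the limit in the $\Phi$-norm of $t\mapsto t^{-1}|\psi(tA)f_n(A)x|_X$ by lower semicontinuity. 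The target statement, with the paper's definition of a bounded $H^\infty$-calculus, is precisely the estimate $|f(A_\Phi)x|_Y\le C\|f\|_\infty|x|_Y$ for every $f\in H^\infty(S_\phi)$ and $x\in\dom{f(A_\Phi)}$, so this step cannot be waved away. Fortunately the repair lies inside your own argument: for $f\in H^{\infty}$ and $x\in\dom{f(A_\Phi)}$ one still has $\psi(tA)f(A)x=(\psi_t f)(A)x$ with $\psi_t f\in H^{\infty}_0$, the element $(\psi_t f)(A)x$ lies in $\overline{\dom{A}}=\overline{\dom{A}\cap\rg{A}}$ (invertibility gives $\rg{A}=X$), so McIntosh's approximation applies to it and your decomposition $\psi(tA)f(A)x=\int_0^\infty(\psi_t f\varphi_s)(A)\psi(sA)x\,\tfrac{\ud s}{s}$, with the same kernel bound, goes through verbatim for general $f\in H^{\infty}$. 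Drop the detour through $H^{\infty}_0$ and the Convergence Lemma and your proof is complete; this is also exactly the issue the paper's choice of $\gamma_t f$ sidesteps by design.
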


Let $A_{\Phi}$ denote the part of $A$ in $(X, \dom{A})_{\Phi}$.
Furthermore, note that $f(A_\Phi)$ is the part of $f(A)$ in $(X, \dom{A})_\Phi$.

\begin{proof}[{Proof of Theorem \ref{general Dore th}}]
Fix $\alpha\in (0,1)$. Let $\gamma(z):=z^\alpha (1+z)^{-1}$ and $\psi(z):=z(1+z)^{-1}$ ($z\in S_\phi$). Note that $\psi, (\cdot)^{-1}\psi\in \cE(S_\phi)$, and $\gamma\psi, (\cdot)^{-1}\gamma\psi \in H_0^\infty(S_\phi)\subseteq \cE(S_\phi)$.
Let $f\in H^\infty(S_\phi)$.
By algebraic properties of holomorphic functional calculus, see for example \cite[Theorem 1.3.2]{Hs06}, we get
\begin{equation}\label{alg}
 (\gamma\psi)(tA)f(A)x = (\gamma_t f)(A) \psi(tA)x\quad \quad \bigl(t>0,\, x\in \dom{f(A_\Phi)}\bigr),
\end{equation}
where $\gamma_t(z):=\gamma(tz)$ ($z\in S_\phi$). 
Moreover, since $\gamma_tf\in H_0^\infty(S_\phi)$, we have that  
\[
(\gamma_t f)(A)=  \frac{1}{2\pi i}\int_{\partial S_\beta} f(z) \gamma_t(z) R(z,A) \,\ud z\quad \quad (t>0), 
\]
where $\beta\in (\phi_A, \phi)$. 
Thus, if $M:=\sup_{z\in  \partial S_\beta} |z R(z,A) |_{\cL(X)}$, then
\begin{align*}
|(\gamma_t f)(A)|_{\cL(X)} & \leq \frac{M \|f\|_\infty }{2\pi }\int_{\partial S_\beta} 
|\gamma_t(z)| \frac{1}{|z|} |\ud z| \\
& = \frac{2M \|f\|_\infty }{2\pi }\int_{0}^\infty  \frac{t^\alpha r^\alpha}{|1+tre^{i\beta}|} \frac{1}{r} \,\ud r\\
& = \frac{2M \|f\|_\infty }{2\pi }\int_{0}^\infty  \frac{s^\alpha}{|1+se^{i\beta}|} \frac{1}{s}\, \ud s \\
& \leq C_{\alpha,  A} \, \|f\|_\infty .
\end{align*}
This inequality, combined with the estimate (\ref{alg}), yields
\[
|(\gamma\psi)(tA)f(A)x|_X \leq  C_{\alpha, A} \, \|f\|_\infty |\psi(tA)x|_X 
\quad \quad \bigl(x\in \dom{f(A_\Phi)} , \, t>0 \bigr).
\]
Therefore, by Theorem \ref{fcrepresent2},  the proof is complete.
\end{proof}

\begin{remark}\label{CKvsKK} 
The question arises whether $f(A_\Phi)$ is densely defined and thus extends to a bounded operator on $(X,\dom{A})_\Phi$. In this context, note that if $A$ is invertible, then $A_\Phi$ is invertible, too. Furthermore, $\dom{A_\Phi}$ is a core for $f(A_\Phi)$, so that the question above reduces to the question whether $A_\Phi$ is densely defined. By Theorem \ref{fcrepresent2}, this question has a positive answer if $A$ is densely defined and $\Phi$ has absolutely continuous norm.
\end{remark}

\section{Weighted inequalities for the Hardy operator} \label{sec.weights} 

In this section we identify subclasses of $\mathfrak{L}_P$ and $\mathfrak{L}_{P+Q}$ which play a role in applications to Cauchy problems. Throughout this section, let $\E$ be a rearrangement invariant Banach function space over $(\R_+,\ud t)$ equipped with the Banach function norm $\|\cdot\|_{\E}$. Examples of rearrangement invariant Banach function spaces are the $L^p$ spaces ($p\in (1,\infty)$), the Lorentz spaces $L^{p,q}$ ($p$, $q\in [1,\infty )$), and the Orlicz spaces $L^\Phi$.

A measurable function $w:(0,\infty )\rightarrow (0,\infty )$ is called a {\em weight } on $(0,\infty )$. We assume throughout that weights are locally integrable on $(0,\infty)$, and sometimes restrict to weights which are locally integrable on $\R_+ = [0,\infty )$.

Denote by $\cM^+$ the cone of all nonnegative, measurable functions in $\cM$. Given a weight $w$ on $(0,\infty )$, we define for every function $f\in\cM^+$ its {\em distribution function} $w_f : (0,\infty ) \to \R_+$ by
\[
 w_f (\lambda ) := w (\{ |f| >\lambda \} ) \qquad (\lambda >0 ) ,
\]
and its {\em decreasing rearrangement} with respect to the weighted Lebesgue measure $w\,\ud t$, $f^*_w : \R_+ \to \R_+$ by
\[
 f^*_w (t) := \inf \{ \lambda > 0 : w_f (\lambda ) \le t\}  \qquad (t\ge 0 ) .
\]
Then we define $\|\cdot\|_{\E_w}:\cM^+\rightarrow [0,\infty]$ by $\| f\|_{\E_w} := \|f^*_w\|_{\E}$. By \cite[Theorem 4.9, Chapter 2]{BeSh88}, $\|\cdot\|_{\E_w}$ is a rearrangement invariant Banach function norm over $(\R_+, w\,\ud t)$. We put
\[
 \E_w := \{ f\in\cM : \| f\|_{\E_w} <\infty \} ,
\]
and equip this space with the semi-norm $\|\cdot\|_{\E_w}$. By passing to a quotient space we obtain in this way a Banach space, which is by abuse of notation again denoted by $\E_w$. Note that, in the case of $\E = L^p$ ($p\in [1,\infty )$), the weighted space $L^p_w$ thus defined coincides with the usual weighted space $L^p (\R_+ ,w\,\ud t)$. It follows from the definition that the spaces $\E_w$ are always order ideals in $\cM$ and they are thus Banach function spaces in the sense of this article. 

Given a rearrangement invariant Banach function space $\E$, we denote by $M_\E$  (resp. $M^{\E} )$  the class of all weights $w\in L^1_{loc} ((0,\infty) )$ such that $\E_w \subseteq L^1_{loc}$ (resp. $\E_w$ is a subset of the maximal domain of $Q$) and such that the Hardy operator $P$ (resp. its adjoint $Q$) is bounded on $\E_w$. Moreover, we set $C_\E:=M_\E\cap M^\E$ for the class of weights such that the {\em Calder\'on operator} $P+Q$ is bounded on $\E_w$; necessarily we require here $\E_w \subseteq L^1 (\R_+ ; \min (1,t^{-1})\, \ud t)$.

\begin{lemma} \label{lem.chi}
Let $\E$ be a rearrangement invariant Banach function space, and let $w$ be a weight which is locally integrable on $(0,\infty )$. Then $w$ is locally integrable on $\R_+$ if and only if $\chi_{(0,1)} \in\E_w$.
\end{lemma}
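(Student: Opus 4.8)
The statement is an equivalence, so I would prove the two implications separately; both are elementary once one unwinds the definition of $\E_w$, and the only real subtlety is keeping track of the distribution function of the characteristic function $\chi_{(0,1)}$ with respect to the weighted measure $w\,\ud t$. The key observation is that the decreasing rearrangement $(\chi_{(0,1)})^*_w$ of $\chi_{(0,1)}$ with respect to $w\,\ud t$ is itself a characteristic function of an interval: indeed, the distribution function is $w_{\chi_{(0,1)}}(\lambda) = w(\{\chi_{(0,1)}>\lambda\})$, which equals $w((0,1)) = \int_0^1 w\,\ud t$ for $0<\lambda<1$ and equals $0$ for $\lambda\ge 1$. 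Writing $a := \int_0^1 w\,\ud t \in (0,\infty]$, one gets $(\chi_{(0,1)})^*_w = \chi_{(0,a)}$ (with the convention $\chi_{(0,\infty)} = \chi_{\R_+}$ when $a = \infty$), so that $\|\chi_{(0,1)}\|_{\E_w} = \|\chi_{(0,a)}\|_{\E}$.

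For the direction ``$w$ locally integrable on $\R_+$ $\Rightarrow$ $\chi_{(0,1)}\in\E_w$'': local integrability on $\R_+$ means precisely $a = \int_0^1 w\,\ud t < \infty$, so $(\chi_{(0,1)})^*_w = \chi_{(0,a)}$ with $a$ finite. Since $\E$ is a rearrangement invariant Banach function space over $(\R_+,\ud t)$, every characteristic function of a finite-measure set lies in $\E$ — this is part of property P4/P5 in the Bennett--Sharpley axioms, or follows from $\chi_{(0,a)}\le \max(1,a)\min(1,(\cdot)^{-1})$ together with the fact that $\min(1,(\cdot)^{-1})\in\E$ whenever $\E$ contains some characteristic function, which it does since $\E\ne\{0\}$. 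Hence $\|\chi_{(0,1)}\|_{\E_w} = \|\chi_{(0,a)}\|_\E < \infty$, i.e. $\chi_{(0,1)}\in\E_w$.

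For the converse: if $w$ is not locally integrable on $\R_+$, then since $w$ is locally integrable on $(0,\infty)$ the only possible failure is at the origin, so $a = \int_0^1 w\,\ud t = \infty$. Then $(\chi_{(0,1)})^*_w = \chi_{\R_+}$, the constant function $1$ on all of $\R_+$. But $\chi_{\R_+}\notin\E$: a rearrangement invariant Banach function space over $(\R_+,\ud t)$ never contains the constant function $1$, because $\|1\|_\E = \sup_{t>0}\|\chi_{(0,t)}\|_\E = \infty$ (the fundamental function $\varphi_\E(t) = \|\chi_{(0,t)}\|_\E$ of any r.i.\ space over a non-atomic infinite measure space tends to $\infty$, or again use that $1\ge \min(1,(\cdot)^{-1})\cdot t$ for each $t$). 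Therefore $\|\chi_{(0,1)}\|_{\E_w} = \|\chi_{\R_+}\|_\E = \infty$, so $\chi_{(0,1)}\notin\E_w$, which is the contrapositive of what we want. The main (very minor) obstacle is just citing the correct Bennett--Sharpley lemma guaranteeing that a nonzero r.i.\ norm is finite on sets of finite measure and infinite on the whole line; everything else is the computation of the rearrangement above.
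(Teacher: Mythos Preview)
Your proof is correct and follows exactly the paper's approach: compute $(\chi_{(0,1)})^*_w = \chi_{(0,a)}$ with $a=\int_0^1 w$, and then use that $\chi_{(0,a)}\in\E$ if and only if $a<\infty$. The paper compresses this into a single line, while you spell out the distribution-function computation and the justification that $\chi_{(0,a)}\in\E\iff a<\infty$; the latter is the only place needing a citation (Bennett--Sharpley axioms P4/P5 for r.i.\ norms, which the paper implicitly invokes).
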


\begin{proof}
Note that $\chi_{(0,1)}\in\E_w$ if and only if $(\chi_{(0,1)})^*_w = \chi_{(0,\int_0^1 w)} \in\E$ if and only if $\int_0^1 w$ is finite.  
\end{proof}

By \cite[Theorem 1]{Mu72a}, a weight $w$ on $(0,\infty)$ belongs to $M_{L^p}=:M_p$ ($p\in (1,\infty )$) if and only if it satisfies the condition
\[
 [w]_{M_p}:=\sup_{r>0} \left( \int_r^\infty \frac{w(s)}{s^p}\, \ud s\right) \left( \int_0^r w^{1-p'}(s)\, \ud s \right)^{p-1}<\infty.\tag{$M_p$}
\]
For $p=1$, $w\in M_1$ if and only if 
\[
[w]_{M_1}:=\|Qw/w\|_{L^\infty(\R_+)}<\infty
\tag{$M_1$}
\]
The classes $M^p$ for $p\in[1,\infty)$ are defined analogously and one has $M^p=\{w^{1-p}: w\in M_{p'}\}$ with $[w]_{M^p}=[w^{1/1-p}]_{M_{p'}}$ for $p\in(1,\infty)$, and $w\in M^1$ if and only if $\|Pw/w\|_{L^\infty(\R_+)} <\infty$. Weighted estimates for the Hardy operator $P$ in Lorentz spaces have been studied by several authors; see, for example, Mart{\'{\i}}n \& Milman \cite{MaMi06} and the references therein. To our best knowledge, the description of the classes $M_\E$ and $M^\E$ for an arbitrary rearrangement invariant function space $\E$ is not provided in the literature. The following result, however, identifies a large subset of weights included in $M_\E$ and $M^\E$; see also  Remark \ref{quest} below. 

\begin{theorem}\label{pq operators}
 Let $\E$ be any rearrangement invariant Banach function space over $(\R_+, \ud t)$ with 
 Boyd indices $p_\E$, $q_\E\in (1, \infty)$. Then the following statements hold:
 \begin{itemize}
  \item [(a)] For every weight $w\in\bigcup_{q<p_\E}M_q$ the operator $P$ is bounded on $\E_w$, that is, 
\[
\bigcup_{q<p_\E} M_q \subseteq M_\E .
\]
  \item [(b)] For every weight $w\in \bigcup_{q<p_\E}M^q$ the operator $Q$ is bounded on $\E_w$, that is,
\[
 \bigcup_{q<p_\E}M^q \subseteq M^\E .
\]
  \item [(c)] For every weight $w\in \bigcup_{q<p_\E}M_q\cap M^q$, the Calder\'on operator $P+Q$ is bounded in $\E_w$, that is,
\[
 \bigcup_{q<p_\E}M_q\cap M^q \subseteq C_\E .
\]
 \end{itemize}
\end{theorem}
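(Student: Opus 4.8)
The plan is to reduce the theorem to the classical weighted estimates for the Hardy operator and its adjoint in $L^p$-spaces, using an extrapolation/interpolation argument based on the Boyd indices. The key observation is that the conditions $(M_q)$ are nested in a useful way: if $w\in M_q$, then $w\in M_{\tilde q}$ for all $\tilde q$ in some interval, and more importantly the boundedness of $P$ on $L^q_w$ for one value of $q<p_\E$ combined with a trivial endpoint gives, by interpolation with change of measure, boundedness on a whole scale of $L^r_w$ spaces with $r$ close to $p_\E$ from below. Once $P$ is bounded on $L^{q_0}_w$ and $L^{q_1}_w$ with $1<q_0<q_1$ straddling (or both below) $p_\E$, one wants to transfer this to $\E_w$ via the Boyd interpolation theorem, exploiting that $\E$ lies ``between'' $L^{q_0}$ and $L^{q_1}$ in the sense measured by the Boyd indices $p_\E, q_\E \in (1,\infty)$.

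Concretely, for part (a) I would first fix $w\in M_q$ with $q<p_\E$. The plan is to show that $P$, viewed as an operator on functions, satisfies a weighted estimate that, after passing to decreasing rearrangements with respect to $w\,dt$, becomes an estimate governed by a Calder\'on-type operator on the cone of decreasing functions on $(\R_+,dt)$; this is the standard mechanism by which $\|f\|_{\E_w}=\|f^*_w\|_\E$ lets one work in the model space $(\R_+,dt)$. One then uses the characterization recalled in Remark \ref{rem.hardy}(a) (via \cite[Theorem 5.15]{BeSh88}) that on a rearrangement invariant space $\E$ with $p_\E>1$ the Hardy operator is bounded, together with a Calder\'on-Mityagin / Boyd interpolation argument: since $w\in M_q$ gives boundedness of $P$ on $L^q_w$ for some $q<p_\E$, and one trivially has a crude bound at another exponent, interpolating these with the rearrangement-invariant space $\E$ (whose Boyd indices both lie in $(q,\infty)$ after possibly shrinking $q$) yields boundedness of $P$ on $\E_w$. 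Part (b) is entirely analogous, working with $Q$ in place of $P$ and using $M^q=\{w^{1-q}:w\in M_{q'}\}$ together with the duality $\int fQg=\int gPf$; and part (c) follows immediately by adding the estimates from (a) and (b), since $\bigcup_{q<p_\E}(M_q\cap M^q)\subseteq M_\E\cap M^\E=C_\E$, noting the local integrability conditions are automatic from $\chi_{(0,1)}\in\E_w$ via Lemma \ref{lem.chi}.

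The hard part will be making the interpolation step rigorous in the weighted rearrangement-invariant setting: one needs a version of Boyd's interpolation theorem (or the Calder\'on couple property of $(L^1,L^\infty)$) that applies to the operator $P$ acting between the \emph{weighted} spaces $L^{q}_w$, and one must verify that the relevant operator is of suitable weak or strong type at the required endpoints so that $\E$, characterized by $p_\E,q_\E\in(1,\infty)$, is an admissible interpolation space between them. A subtlety is that $P$ is not bounded on $L^1_w$ in general, so the lower endpoint cannot be taken to be $1$; instead one must use that $w\in M_q$ for some $q$ \emph{strictly} below $p_\E$, pick an auxiliary exponent $q<q'<p_\E$ (using that $M_q\subseteq M_{q'}$ fails in general but a uniform-in-a-neighborhood openness property of the $A_p$-type condition $(M_q)$ does hold — this is the self-improvement property of Muckenhoupt-type classes), and then interpolate the pair $L^{q}_w$, $L^{q''}_w$ with $q<p_\E<q''$ if needed, or simply two exponents below $p_\E$ if one only wants boundedness and not a sharp constant. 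I expect the self-improvement of the $(M_q)$ condition and the precise form of the rearrangement-invariant interpolation theorem to be the two technical points requiring the most care; everything else is bookkeeping with decreasing rearrangements and the order-ideal property of $\E_w$.
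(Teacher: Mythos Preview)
Your overall strategy---pass to decreasing rearrangements with respect to $w\,\ud t$, obtain weak-type bounds for $P$ at two exponents, and then invoke Boyd's interpolation theorem on $\E$---is exactly the mechanism the paper uses. However, your plan contains a concrete error that would derail the argument, and it leaves the crucial choice of endpoints unspecified.

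You write that ``$M_q\subseteq M_{q'}$ fails in general'' and propose to fall back on a self-improvement (openness) property of the $M_q$ classes. This is backwards. The monotone inclusion $M_q\subseteq M_r$ for $q<r$ \emph{does} hold (it follows directly from H\"older's inequality applied to the Muckenhoupt-type condition $(M_q)$), and it is precisely what the paper uses. By contrast, the openness property $\bigcup_{s<q}M_s=M_q$ \emph{fails} for the $M_q$ classes; this is recorded later in the paper (Proposition~\ref{connection}(a), following \cite{DuMROm13}). So the self-improvement route you sketch is not available, and it is not needed.

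With the correct monotonicity in hand the argument is clean. Given $w\in M_q$ with $q<p_\E$, pick any $r>q_\E$; then $w\in M_q\subseteq M_r$, so $P$ is bounded on $L^q_w$ and on $L^r_w$ and hence is of joint weak type $(q,q;r,r)$ with respect to $(\R_+,w\,\ud t)$. By the Calder\'on rearrangement inequality \cite[Definition~5.4 and Theorem~4.11]{BeSh88} this yields the pointwise estimate $(Pf)^*_w\leq C\,S_\sigma(f^*_w)$ for the Calder\'on operator $S_\sigma$ associated with the segment $\sigma=(q^{-1},q^{-1};r^{-1},r^{-1})$. Since $q<p_\E\le q_\E<r$, Boyd's theorem gives $S_\sigma$ bounded on $\E$, and $\|Pf\|_{\E_w}=\|(Pf)^*_w\|_\E\leq C\|S_\sigma f^*_w\|_\E\leq C'\|f^*_w\|_\E=C'\|f\|_{\E_w}$. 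Note in particular that the second endpoint must be taken \emph{above} $q_\E$, not merely above $p_\E$ (and certainly not below $p_\E$, which your last alternative suggests); otherwise Boyd's theorem does not apply to $\E$. Part~(b) is identical using $M^q\subseteq M^r$ for $q<r$, and (c) is just (a)$+$(b).
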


\begin{proof} 
(a) Fix $q\in (1,p_\E )$ and let $w\in M_q$. Choose $r\in (q_\E ,\infty )$. Then $q<r$ and therefore $M_q \subseteq M_r$. In particular, by \cite[Theorem 4.11, p.~223]{BeSh88} (see also \cite[Theorem 8]{Ca66}),
  $P$ is of joint weak type $(q,q;r,r)$ with respect to $(\R_+, w\,\ud t)$. More precisely, according to \cite[Definition 5.4, p.\,143]{BeSh88}, for every $f\in L^{r,1}_w+L^{q, 1}_w$ and every $t>0$
\[ 
(Pf)^*_w(t) \leq C S_\sigma (f^*_w)(t),
\]
where $S_\sigma$ stands for the corresponding Calder\'on operator associated with the interpolation segment $\sigma:=(q^{-1},q^{-1};r^{-1},r^{-1})$ (see \cite[p.~142]{BeSh88}). By Boyd's theorem \cite[Theorem 5.16, p.\,153]{BeSh88}, $S_\sigma$ is bounded on $\E$. Therefore, we obtain
\[ 
\|Pf\|_{\E_w} =\|(Pf)^*_w\|_{\E}\leq C \|S_\sigma f^*_w\|_{\E}\leq 
 C\|S_\sigma\|_{\cL(\E)}  \| f\|_{\E_w}
\]
for every $f\in L^{r,1}_w+L^{q, 1}_w(\R_+ ) $. Since $\E_w\subseteq L^{r,1}_w +L^{q, 1}_w$ (see, for example, \cite[Lemma 4.2]{ChKr14}), this inequality yields boundedness of $P$ on $\E_w$ and thus $w\in M_\E$.
 
Since $M^q\subseteq M^p$ for every $q<p$ (see, for example, \cite[Proposition 2.9]{BaMiRu01}), the proof of (b) follows exactly the same argument as above. The statement (c) is an immediate consequence of (a) and (b) and the definition of $C_\E$.
\end{proof}

As in \cite{ChKr15}, we also introduce the classes $A^-_p = A^-_p (\R_+)$ and $A^+_p = A^+_p (\R_+ )$ of Muckenhoupt-Sawyer type weights associated with one-sided maximal functions on the half-line. First, given $p\in (1,\infty )$, we say that a weight $w$ belongs to $A^-_p$, if
\[
[w]_{A_p^-}:=\sup_{0\leq a<b<c }\frac{1}{(c-a)^p}
\left( \int_b^c w \, \ud t \right) \left( \int_a^bw^{1-p'} \ud t \right)^{p-1}<\infty. \tag{$A_p^-$}
\]
Second, we set $A^+_{p}:=\{w^{1-p}: w\in A^-_{p'}\}$ ($p\in (1, \infty)$). Note that every decreasing function $w:(0,\infty)\rightarrow (0,\infty)$ belongs to $A_p^-$ for every $p\in(1,\infty)$. Moreover, $A_p^-\subseteq M_p$ for every $p\in(1,\infty)$; in order to see this, use also that for every $p\in (1,\infty )$ and every weight $w\in A^-_p$ one has $\E_w \subseteq L^r_{loc} (\R_+ )$ for some $r>1$ \cite[Proposition 4.2(ii)]{ChKr15}. Since, in addition, the classes $A_p^-$ possess the so-called openness property, that is, $\bigcup_{q<p}A_q^-=A_p^-$ (see \cite[Lemma 2.5]{ChKr15}), we get the following consequence of Theorem \ref{pq operators}.

\begin{corollary}\label{A-p}
Let $\E$ be a rearrangement invariant Banach function space over $(\R_+, \ud t)$ with Boyd indices $p_\E$, $q_\E\in (1, \infty)$. Then, $A^-_{p_\E}\subseteq M_\E$ and $A^+_{p_\E}\subseteq M^\E$. In particular, $A^-_{p_\E}\cap A^+_{p_\E}\subseteq C_\E$.
\end{corollary}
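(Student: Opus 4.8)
\textbf{Proof proposal for Corollary \ref{A-p}.}

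The plan is to deduce this directly from Theorem \ref{pq operators} by combining two ingredients: the inclusion $A^-_p \subseteq M_p$ (valid for every $p \in (1,\infty)$) and the openness (self-improvement) property $\bigcup_{q<p} A^-_q = A^-_p$ of the classes $A^-_p$. First I would recall, as noted just before the statement, that for every $p \in (1,\infty)$ one has $A^-_p \subseteq M_p$: indeed, given $w \in A^-_p$, the reverse-Hölder-type property $\E_w \subseteq L^r_{loc}(\R_+)$ for some $r>1$ from \cite[Proposition 4.2(ii)]{ChKr15} guarantees the structural condition $\E_w \subseteq L^1_{loc}$ required in the definition of $M_\E$ (specialized to $\E = L^p$, this is $\E_w \subseteq L^1_{loc}$), while the $(A_p^-)$ condition implies the corresponding one-sided weighted Hardy inequality, so that $P$ is bounded on $L^p_w$.

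The key step is then the following chain. Fix $q < p_\E$; by openness of the classes, $A^-_{p_\E} = \bigcup_{q < p_\E} A^-_q$, so it suffices to show $A^-_q \subseteq M_\E$ for each such $q$. By the inclusion above, $A^-_q \subseteq M_q$, and hence $A^-_q \subseteq \bigcup_{q < p_\E} M_q \subseteq M_\E$ by Theorem \ref{pq operators}(a). Taking the union over $q < p_\E$ gives $A^-_{p_\E} \subseteq M_\E$. The statement $A^+_{p_\E} \subseteq M^\E$ follows by the dual argument: by definition $A^+_p = \{ w^{1-p} : w \in A^-_{p'} \}$ and likewise $M^q = \{ w^{1-q} : w \in M_{q'} \}$, so the inclusion $A^-_q \subseteq M_q$ transfers to $A^+_q \subseteq M^q$ for every $q \in (1,\infty)$; combined with the openness property and Theorem \ref{pq operators}(b), this yields $A^+_{p_\E} = \bigcup_{q<p_\E} A^+_q \subseteq \bigcup_{q<p_\E} M^q \subseteq M^\E$.

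Finally, the ``in particular'' assertion is immediate: if $w \in A^-_{p_\E} \cap A^+_{p_\E}$, then $w \in M_\E$ and $w \in M^\E$ by the two inclusions just established, hence $w \in M_\E \cap M^\E = C_\E$ by the definition of $C_\E$ (one should only note that the structural requirement $\E_w \subseteq L^1(\R_+; \min(1,t^{-1})\,\ud t)$ built into the definition of $C_\E$ is automatic here, since $\E_w \subseteq L^1_{loc}$ from membership in $M_\E$ takes care of the local part while membership in $M^\E$ handles integrability against $t^{-1}$ near infinity). I do not anticipate a genuine obstacle: the corollary is a bookkeeping consequence of Theorem \ref{pq operators} together with two facts already cited in the surrounding text, so the only point requiring care is to check that the side conditions appearing in the definitions of $M_\E$, $M^\E$ and $C_\E$ are met, which is exactly where \cite[Proposition 4.2(ii)]{ChKr15} and \cite[Lemma 2.5]{ChKr15} enter.
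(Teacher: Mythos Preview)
Your proposal is correct and follows essentially the same route as the paper: the paper derives the corollary from Theorem \ref{pq operators} using the inclusion $A^-_p\subseteq M_p$ (justified via \cite[Proposition 4.2(ii)]{ChKr15}) together with the openness property $\bigcup_{q<p}A^-_q=A^-_p$ from \cite[Lemma 2.5]{ChKr15}, and treats the $A^+$ case by symmetry. Your write-up is in fact slightly more explicit than the paper's, in that you spell out the duality $A^+_q\subseteq M^q$ and check the structural side conditions in the definitions of $M_\E$, $M^\E$, $C_\E$.
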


At the end of the section, we make some complementary remarks to Theorem \ref{pq operators} and Corollary \ref{A-p}, that is, we collect some more relations between the classes $M_p$, $C_p$, and $A^-_p$, $A^+_p$. 

\begin{proposition}\label{connection} 
The following assertions hold:
\begin{itemize}
 \item [(a)] The classes $M_{p}$ and $M^{p}$ ($p\in (1,\infty)$) do not have the openness property in the sense that $\bigcup_{q<p} M_q \subsetneqq M_p$ and $\bigcup_{q<p} M^q \subsetneqq M^p$ for every $p\in(1,\infty)$. In particular, the inclusions $A_p^- \subseteq M_{p}$ and $A^+_p\subseteq M^p$ for $p\in (1, \infty)$ are proper.
 \item [(b)] For every $p\in (1,\infty)$, the inclusions $A_{p}^-\cap A_{p}^+ \subseteq C_p \subseteq A_p^-\cap L_{loc}^1 \subseteq A_p^-$ are proper.
\end{itemize}
\end{proposition}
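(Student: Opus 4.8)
\textbf{Proof plan for Proposition \ref{connection}.}

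The plan is to establish each strict inclusion by exhibiting explicit counterexamples, exploiting the structural difference between the two-sided local condition defining $M_p$ (where the integrals run over $(0,r)$ and $(r,\infty)$) and the fully one-sided, all-intervals condition defining $A_p^-$. For part (a), the key observation is that the failure of openness for $M_p$ is classical in the $L^p$ theory: there exist weights $w$ satisfying $(M_p)$ but failing $(M_q)$ for every $q<p$. One concretely tests power-type weights of the form $w(t)=t^{p-1}(1+|\log t|)^{-\beta}$ near $t=0$ (or their reciprocal-type analogues near $\infty$) — these sit at the boundary of the $M_p$ condition and the logarithmic factor is chosen so that the supremum in $(M_p)$ is finite but the supremum in $(M_q)$ diverges as soon as $q<p$ because the exponent $p-1$ is replaced by the strictly smaller $q-1$. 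I would then verify that such a $w$ nevertheless lies in $A_p^-$: here one uses that $w$ is essentially a product of a decreasing function and a slowly varying factor, and decreasing weights always belong to $A_q^-$ for all $q\in(1,\infty)$, together with the openness property $\bigcup_{q<p}A_q^-=A_p^-$ from \cite[Lemma 2.5]{ChKr15}. This simultaneously gives $\bigcup_{q<p}M_q\subsetneqq M_p$ and, since $A_p^-=\bigcup_{q<p}A_q^-\subseteq\bigcup_{q<p}M_q$ by $A_q^-\subseteq M_q$, the strict inclusion $A_p^-\subsetneqq M_p$; the $M^p$ statements follow by the duality $M^p=\{w^{1-p}:w\in M_{p'}\}$ recorded in the text.

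For part (b), the chain $A_p^-\cap A_p^+\subseteq C_p\subseteq A_p^-\cap L^1_{loc}\subseteq A_p^-$ has three inclusions to separate. The middle inclusion $C_p\subseteq A_p^-$ holds because $C_p=M_p\cap M^p$ and $M_p\subseteq A_p^-$? — no, that is the wrong direction; rather one uses that $w\in C_p$ forces $\E_w\subseteq L^1(\R_+;\min(1,t^{-1})\,dt)$ hence in particular $w$ is locally integrable on $\R_+$, i.e. $w\in L^1_{loc}(\R_+)$, and separately that $w\in M_p\Rightarrow$ a Muckenhoupt-type local condition which, combined with the $Q$-boundedness from $M^p$, upgrades to the one-sided condition $A_p^-$; I expect this to be the \emph{main obstacle}, since deriving $A_p^-$ from the two conditions $(M_p)$ and $(M^p)$ simultaneously requires a genuine argument rather than a counterexample, and one may instead cite or adapt \cite[Proposition 4.2]{ChKr15}. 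For the strictness of $C_p\subseteq A_p^-\cap L^1_{loc}$, I would take a decreasing weight $w$ that is locally integrable on $\R_+$ but so large at infinity that $\int_t^\infty w(s)s^{-p}\,ds$ diverges — e.g. $w\equiv 1$, for which $P$ is bounded on $L^p_w$ but $w\notin M^p$ since $Q\mathbf{1}/\mathbf{1}\notin L^\infty$, hence $w\notin C_p$ while $w\in A_p^-\cap L^1_{loc}$. For the strictness of $A_p^-\cap L^1_{loc}\subsetneqq A_p^-$, one picks a decreasing weight with a non-integrable singularity at $0$, say $w(t)=t^{-1}$, which lies in $A_p^-$ (being decreasing) but not in $L^1_{loc}(\R_+)$. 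Finally, $A_p^-\cap A_p^+\subsetneqq C_p$ is separated by finding $w\in C_p$ failing one of $A_p^\pm$: a weight like $w(t)=t^{p-1}(1+|\log t|)^{-\beta}$ as above, suitably symmetrised or truncated so that it lands in $C_p$ (this needs $\bigcup_{q<p}M_q\cap M^q$, but one must check the resulting $w\notin A_p^-\cap A_p^+$), which by part (a)'s analysis escapes $A_p^-$; alternatively a decreasing-increasing hybrid. Throughout, the only delicate points are the explicit integral estimates verifying membership/non-membership in $M_p$ versus $A_p^-$ for the logarithmically perturbed power weights, which are routine but must be carried out with care regarding the behaviour at $0$ versus $\infty$.
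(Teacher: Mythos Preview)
Your proposal has several genuine errors that would prevent the argument from going through.

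\textbf{Part (a).} First, the logic is internally contradictory. You want a weight $w\in M_p\setminus\bigcup_{q<p}M_q$, and then you write that you would ``verify that such a $w$ nevertheless lies in $A_p^-$''. But that is impossible: by the very argument you give a few lines later, $A_p^-=\bigcup_{q<p}A_q^-\subseteq\bigcup_{q<p}M_q$, so any $w$ outside $\bigcup_{q<p}M_q$ is automatically outside $A_p^-$. You do not need (and cannot have) $w\in A_p^-$; the strict inclusion $A_p^-\subsetneqq M_p$ follows purely from the failure of openness, as you correctly state afterwards.

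Second, your candidate weight $w(t)=t^{p-1}(1+|\log t|)^{-\beta}$ does not satisfy $(M_p)$. Indeed $w^{1-p'}(s)=s^{-1}(1+|\log s|)^{\beta/(p-1)}$, so $\int_0^r w^{1-p'}\,ds=\infty$ for every $r>0$ and the $(M_p)$ supremum is infinite. The paper avoids any explicit construction here: it observes that openness of all $M_p$ is equivalent (via the duality $M^p=\{w^{1-p}:w\in M_{p'}\}$) to openness of all $M^p$, and that if both families were open then $C_p=M_p\cap M^p$ would be open as well, contradicting \cite[Proposition~4.1]{DuMROm13}. This abstract route is both shorter and sidesteps the delicate construction you attempt.

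\textbf{Part (b).} Your counterexample $w\equiv 1$ for $C_p\subsetneqq A_p^-\cap L^1_{loc}$ is wrong: for $1<p<\infty$ both $P$ and $Q$ are bounded on unweighted $L^p$ (Hardy's inequality and its dual), so $w\equiv 1\in C_p$. The condition $Q\mathbf{1}/\mathbf{1}\notin L^\infty$ that you invoke is relevant to $M_1$, not to $M^p$ for $p>1$. The paper instead uses
\[
w(t)=\begin{cases} t^\alpha & 0<t<1,\\ t^{-1} & t\geq 1,\end{cases}\qquad \alpha\in(-1,0],
\]
and checks that $w\in\bigcap_{p>1}A_p^-\cap L^1_{loc}(\R_+)$ while $w\notin C_p$ for any $p$. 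For the first strict inclusion $A_p^-\cap A_p^+\subsetneqq C_p$ the paper again argues abstractly: $A_p^-\cap A_p^+$ inherits openness from each factor, whereas $C_p$ does not have it by \cite{DuMROm13}, so the inclusion must be proper. Your idea of producing an explicit hybrid weight is unnecessary and, as you yourself note, uncertain. Your treatment of the last inclusion ($w(t)=t^{-1}$ decreasing, hence in $A_p^-$, but not in $L^1_{loc}(\R_+)$) is correct and agrees with the paper.

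Finally, you flag the inclusion $C_p\subseteq A_p^-$ as the ``main obstacle''. The paper does not prove this inclusion in the course of the proposition; it is taken as known from the literature on Calder\'on weights (e.g.\ \cite{BaMiRu01,DuMROm13}), and the proof focuses solely on strictness.
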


\begin{proof}
 (a) Note first that the openness property for the classes $M_p$ is equivalent to the openness property for the classes $M^p$.   However, the lack of the openness property for the class $C_p=M_p\cap M^p$ has recently been shown in in Duoandikoetxea, Martin-Reyes \& Ombrosi \cite[Proposition 4.1]{DuMROm13}. Since $M_q\subseteq M_p$ and $M^q\subseteq M^p$ for all $p$, $q\in (1,\infty )$ with $q<p$, and since the classes $\bigcup_{q<p} M^q$ $(p\in(1,\infty))$ do possess the openness property, the two inclusions in the statement are strict. We remark in addition to this abstract argument that a straightforward calculation shows that the weight constructed in the proof of \cite[Proposition 4.1]{DuMROm13} illustrates this fact, too. 
  
 (b) A similar argument as in (a) shows that the first inclusion is proper. For the last one, recall that every decreasing function on $(0,\infty)$ is in $A_{p}^-$, but not necessarily locally integrable on $\R_+$. In order to show that $C_p \subsetneqq A_p^-\cap L_{loc}^1$, one may check that for every $\alpha \in(-1,0]$ the weight 
\[
w(t):=  \left\{\begin{array}{cc} t^\alpha, & 0<t<1,\\
 t^{-1},& t\geq 1,
 \end{array}\right. 
\]
belongs to $\bigcap_{1<p<\infty} A^-_p$ but not to any $C_p$ ($p\in (1,\infty)$).
\end{proof}

 \begin{remark}\label{quest}
The question whether the inclusions $M_{p_\E}\subseteq M_\E$ and / or $C_{p_\E}\subseteq C_\E$ hold for any rearrangement invariant Banach function space $\E$ over $(\R_+, \ud t)$ with Boyd indices $p_\E$, $q_\E\in (1,\infty)$ is left open.
\end{remark}

\section{The maximal regularity} \label{sec.appl}

Let $A$ be a closed linear operator on a Banach space $X$. Let $\E$ be a rearrangement invariant Banach function space over $(\R_+ ,\ud t)$, and let $w$ be a weight on $(0,\infty )$. We denote by $\E_{w,loc} (X)$ the space of all (equivalence classes) of measurable functions $f :\R_+ \to X$ such that $|f|_X\, \chi_{(0,\tau )} \in\E_w$ for every $\tau >0$. We say that the first order Cauchy problem
\begin{equation} \label{first order}
\dot{u} + A u = f \text{ on } \R_+ , \quad u(0) = 0,
\end{equation} 
has {\em $\E_w$-maximal regularity} if for each right-hand side $f\in \E_{w, loc}(X)$ there exists a unique function $u\in  W^{1,1}_{loc}(X)$ such that $\dot u$, $Au\in \E_{w, loc}(X)$, and such that $u$ solves (\ref{first order}).

The theory of maximal regularity of abstract linear evolution equations plays an important role in applications to nonlinear problems. Its study combines results and techniques from harmonic analysis, Fourier analysis, theory of singular integral operators, operator theory, geometry of Banach spaces and interpolation theory. Whereas in the definition of $\E_w$-maximal regularity, and thus in the Cauchy problem \eqref{first order} above, one considers only zero initial values, the following result treats the full Cauchy problem with nonzero initial values and right-hand sides. 

\begin{theorem} \label{m.r.extrap}
Assume that the problem \eqref{first order} has $L^p$-maximal regularity for some $p\in (1,\infty )$. Then, for every rearrangement invariant Banach function space $\E$ over $(\R_+, \ud t)$ with Boyd indices $p_\E$, $q_\E\in(1,\infty)$, for every weight $w \in A_{p_\E}^-(\R_+)$, for every $f\in \E_{w,loc} (X)$ and every $x\in (X,\dom{A})_{\E_w}$ the problem
\begin{equation} \label{first order.initial}
\dot{u} + A u = f \text{ on } \R_+ , \quad u(0) = x ,
\end{equation} 
admits a unique solution $u\in W^{1,1}_{loc} (X)$ satisfying $\dot u$, $Au\in \E_{w,loc} (X)$.
\end{theorem}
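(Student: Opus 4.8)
The plan is to decompose the full problem \eqref{first order.initial} into the zero-initial-value part, which is handled by $\E_w$-maximal regularity, and a homogeneous part carrying the initial datum $x$, which is handled by the functional calculus representation of $(X,\dom{A})_{\E_w}$. First I would observe that, since \eqref{first order} has $L^p$-maximal regularity for some $p\in(1,\infty)$, the operator $-A$ generates a bounded holomorphic $C_0$-semigroup $T(\cdot)$ on $X$ (as already noted in Remark \ref{holomorphic}); in particular $A$ is sectorial with $\phi_A<\pi/2$. Moreover, by the extrapolation results of \cite{ChFi14,ChKr15,ChKr14} quoted in the introduction, $L^p$-maximal regularity for one $p$ implies $\E_w$-maximal regularity for every rearrangement invariant $\E$ with Boyd indices $p_\E,q_\E\in(1,\infty)$ and every $w\in A_{p_\E}^-(\R_+)$; note that by Corollary \ref{A-p} such a $w$ lies in $M_\E$, so $\E_w\in\mathfrak{L}_P$, and the trace/$K$-method machinery of Section \ref{sec.definition} and the representation results of Section \ref{sec.functional.calculus} are all available with $\Phi=\E_w$.

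Next I would set $u_1(t):=T(t)x$ and $f_1 := \dot u_1 + Au_1 = 0$; here $u_1$ is the mild (in fact classical, by holomorphy, for $t>0$) solution of $\dot u + Au = 0$, $u(0)=x$. The key analytic point is that $u_1$ has the required regularity precisely because $x\in(X,\dom{A})_{\E_w}$: indeed $\dot u_1(t) = -Ae^{-tA}x$, and by the semigroup representation \eqref{holrep} in Remark \ref{holomorphic} (valid for $\Phi=\E_w\in\mathfrak{L}_P$), the condition $x\in(X,\dom{A})_{\E_w}$ is equivalent to $[t\mapsto |Ae^{-tA}x|_X]\in\E_w$, that is, $\dot u_1 = -Au_1\in\E_w(X)\subseteq\E_{w,loc}(X)$. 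Since also $u_1\in C([0,\infty);X)\cap W^{1,1}_{loc}(X)$ and $u_1(0)=x$, the function $u_1$ is an admissible homogeneous solution of the right regularity class. Then I would let $u_2$ be the unique solution of \eqref{first order} with right-hand side $f$ furnished by $\E_w$-maximal regularity, so that $u_2\in W^{1,1}_{loc}(X)$, $\dot u_2, Au_2\in\E_{w,loc}(X)$, $u_2(0)=0$, and $\dot u_2 + Au_2 = f$. Setting $u:=u_1+u_2$ gives a solution of \eqref{first order.initial} in the claimed regularity class.

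For uniqueness, suppose $u,\tilde u$ are two solutions in $W^{1,1}_{loc}(X)$ with derivatives and $Au$-terms in $\E_{w,loc}(X)$; then $v:=u-\tilde u$ solves $\dot v + Av = 0$, $v(0)=0$, with $\dot v, Av\in\E_{w,loc}(X)$. Since $\E_{w,loc}(X)\subseteq L^1_{loc}(X)$ (because $\E_w\in\mathfrak{L}_P$ forces $\E_w\subseteq L^1_{loc}$), $v$ is a mild solution of the homogeneous problem with zero data, hence $v\equiv0$ by the standard uniqueness for the inhomogeneous Cauchy problem associated with a generator (or, equivalently, by applying the uniqueness part of $\E_w$-maximal regularity to the right-hand side $f\equiv0$, noting $0\in(X,\dom{A})_{\E_w}$ trivially). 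The main obstacle I anticipate is not any single estimate but rather carefully invoking the extrapolation theorem of \cite{ChKr14,ChKr15} to pass from $L^p$-maximal regularity to $\E_w$-maximal regularity with the stated weight class, and checking that the solution produced there, together with the homogeneous orbit $T(\cdot)x$, genuinely lies in $W^{1,1}_{loc}(X)$ with the asserted regularity — in particular that the semigroup characterisation \eqref{holrep} applies with $\Phi=\E_w$, which is exactly where Corollary \ref{A-p} ($w\in A_{p_\E}^-\Rightarrow\E_w\in\mathfrak{L}_P$) is used.
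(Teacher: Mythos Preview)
Your proposal is correct. Both existence and uniqueness go through as you describe, relying on the extrapolation result \cite[Theorem~5.1]{ChKr15} to pass from $L^p$- to $\E_w$-maximal regularity, and on Corollary~\ref{A-p} to guarantee $\E_w\in\mathfrak{L}_P$ so that the representation \eqref{holrep} from Remark~\ref{holomorphic} is available.

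The paper's proof takes a different and somewhat more elementary route for the existence part. Rather than using the explicit semigroup orbit $u_1(t)=e^{-tA}x$ together with the functional-calculus characterization of Remark~\ref{holomorphic}, it uses only the \emph{trace definition} of the interpolation space: since $\E_w\in\mathfrak{L}_P$, Theorem~\ref{trace=interp} gives $(X,\dom{A})_{\E_w}=[X,\dom{A}]_{\E_w}$, so there exists some $v\in W^{1,\E_w}(X,\dom{A})$ with $v(0)=x$; one then solves $\dot z+Az=-\dot v-Av+f$, $z(0)=0$, by $\E_w$-maximal regularity and sets $u:=v+z$. Your construction is precisely the special case $v(t)=e^{-tA}x$, for which $\dot v+Av\equiv 0$ and the auxiliary right-hand side reduces to $f$ itself. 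The paper's argument thus avoids Section~\ref{sec.functional.calculus} altogether (and in particular never needs to invoke that $-A$ generates a holomorphic semigroup, although this does follow from $L^p$-maximal regularity), while your argument yields the more canonical decomposition into semigroup orbit plus zero-trace part, at the price of appealing to the deeper representation theorem.
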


We point out that for the initial value $x=0$ the above theorem reduces to the extrapolation result \cite[Theorem 5.1]{ChKr15} which says that $L^p$-maximal regularity for some $p\in (1,\infty )$ implies $\E_w$-maximal regularity for every rearrangement invariant Banach function space $\E$ and every weight $w$ as in the statement. Note also that, by definition of the trace space $[X,\dom{A}]_{\E_w}$ (and since $\dom{A}\subseteq X$), the condition on the initial value $x$ is necessary for the conclusion (use that the class $A_{p_\E}^-$ is contained in $M_\E$ by Corollary \ref{A-p} and that the interpolation spaces $[X,\dom{A}]_{\E_w}$ and $(X,\dom{A})_{\E_w}$ coincide by Theorem \ref{trace=interp}). The point of Theorem \ref{m.r.extrap} is that the condition $x\in (X,\dom{A})_{\E_w}$ is actually sufficient for the conclusion.

\begin{proof}[Proof of Theorem \ref{m.r.extrap}]
{\em Existence.} Assume that $A$ has $L^p$-maximal regularity for some $p\in (1,\infty )$, and let $\E$, $w$, $f$ and $x$ be as in the statement. Then there exists $v\in W^{1,\E_w} (X,\dom{A})$ such that $v(0) = x$. By \cite[Theorem 5.1]{ChKr15}, there exists $z\in W^{1,1}_{loc} (X)$ such that $\dot z$, $Az\in \E_{w,loc} (X)$ and  
\[
 \dot{z} + Az = -\dot{v} -Av +f \text{ on } \R_+, \quad z(0) = 0 .
\]
Now $u:=v+z$ is a desired solution of \eqref{first order.initial}. 

{\em Uniqueness} follows from the unique solvability of \eqref{first order} with $f=0$, that is, from the assumption of $L^p$-maximal regularity and \cite[Theorem 5.1]{ChKr15}.
\end{proof}

\providecommand{\bysame}{\leavevmode\hbox to3em{\hrulefill}\thinspace}

\bibliographystyle{amsplain}
%\begin{thebibliography}{10}
%\bibliography{ralph}

\end{document}